\definecolor{Chocolat}{rgb}{0.36, 0.2, 0.09}
\definecolor{BleuTresFonce}{rgb}{0.215, 0.215, 0.36}
\definecolor{EgyptianBlue}{rgb}{0.06, 0.2, 0.65}
\newtheorem*{itheorem}{Main conjecture}
\newtheorem{theorem}{Theorem}[section]
\newtheorem{corollary}[theorem]{Corollary}
\newtheorem{proposition}[theorem]{Proposition}
\theoremstyle{definition}
\newtheorem{example}[theorem]{Example}
\newtheorem{remark}[theorem]{Remark}
\newtheorem{definition}[theorem]{Definition}
\DeclareMathAlphabet{\pazocal}{OMS}{zplm}{m}{n}
\def\calB{\pazocal{B}}
\def\calC{\pazocal{C}}
\def\calP{\pazocal{P}}
\DeclareMathOperator{\Lie}{Lie}
\DeclareMathOperator{\Ass}{Ass}
\DeclareMathOperator{\Com}{Com}
\DeclareMathOperator{\Jord}{Jord}
\DeclareMathOperator{\SJord}{SJord}
\DeclareMathOperator{\TAG}{TAG}
\DeclareMathOperator{\Inner}{Inner}
\DeclareMathOperator{\Vect}{\mathsf{Vect}}
\DeclareMathOperator{\TKK}{\mathfrak{T}}
\DeclareMathOperator{\Tor}{Tor}
\DeclareMathOperator{\Mat}{Mat}
\DeclareMathAlphabet{\mathbbold}{U}{bbold}{m}{n}
\def\k{\mathbbold{k}}
\begin{document}

\title[The three graces in the Tits--Kantor--Koecher category]{The three graces in the Tits--Kantor--Koecher category}

\author[Vladimir Dotsenko]{Vladimir Dotsenko}

\address{Institut de Recherche Math\'ematique Avanc\'ee, UMR 7501, Universit\'e de Strasbourg et CNRS, 7 rue Ren\'e-Descartes, 67000 Strasbourg, France}

\email{vdotsenko@unistra.fr}

\author[Iryna Kashuba]{Iryna Kashuba}

\address{International Center of Mathematics, Southern University of Science and Technology, 1088 Xueyuan Avenue, Xili, Nanshan District, Shenzhen, 518055, China}

\email{kashuba@sustech.edu.cn}

\subjclass[2020]{Primary 16S37; Secondary 13D03, 16E40, 17B60, 18C15.}

\keywords{algebra over monad, Koszul algebra, Quillen homology, special Jordan algebras}
\date{}

\begin{abstract}
A metaphor of Loday describes Lie, associative, and commutative associative algebras as ``the three graces'' of the operad theory. In this article, we study the three graces in the category of $\mathfrak{sl}_2$-modules that are sums of copies of the trivial and the adjoint representation. That category is not symmetric monoidal, and so one cannot apply the wealth of results available for algebras over operads. Motivated by a recent conjecture of the second author and Mathieu, we embark on the exploration of the extent to which that category ``pretends'' to be symmetric monoidal. To that end, we examine various homological properties of free associative algebras and free associative commutative algebras, and study the Lie subalgebra generated by the generators of the free associative algebra.
\end{abstract}

\maketitle


\section{Introduction}

For $\k$-linear algebras, at least over a field of characteristic zero, it is nowadays sufficiently standard to place them in the context of algebras over an 
operad in a symmetric monoidal category.
This paper is concerned with one situation where this intuition fails in a very notable way, but many features that operadic algebras exhibit are nevertheless present. 

Let us be a bit more specific about it. In \cite{MR4235202}, the second author and Mathieu proposed a conjecture which, if true, would lead to important new insight into the free Jordan algebra in several generators. The universe in which the story unfolds is the category $\TKK$ 
whose objects are completely reducible $\mathfrak{sl}_2$-modules that decompose as direct sums of copies of trivial modules and adjoint modules.  We can talk about Lie algebras in~$\TKK$, which are objects of~$\TKK$ which are Lie algebras whose Lie bracket is $\mathfrak{sl}_2$-equivariant, and even about free Lie algebras in~$\TKK$. It is proved in \cite{MR4235202} that the free Lie algebra in~$\TKK$ generated by $n$ copies of the adjoint representation can be described via a version of the celebrated Tits--Kantor--Koecher construction \cite{MR0146231,MR0175941,MR214631} due to Allison and Gao \cite{MR1402322,MR3169564}: it is the Lie algebra obtained by that construction from the free (non-unital) Jordan algebra on $n$ generators.

In this paper, we start a systematic study of the following conjecture (in the case where $U$ is the direct sum of several copies of the adjoint representation, this is the central conjecture of \cite{MR4235202}).

\begin{itheorem}
For an object $U$ of the category~$\TKK$, let us consider the homology with trivial coefficients of the free Lie algebra in~$\TKK$ generated by~$U$ with its natural $\mathfrak{sl}_2$-action, and let us truncate it by quotienting out all irreducible $\mathfrak{sl}_2$-submodules different from the trivial and the adjoint representation. That truncation is concentrated in degrees $0$ and $1$.
\end{itheorem}

In a sense, this suggests that in certain aspects the category $\TKK$ ``pretends'' to be a symmetric monoidal category. Indeed, for any symmetric monoidal category~$\calC$ containing the symmetric monoidal category of vector spaces over a field~$\k$ of zero characteristic as a full monoidal subcategory, the homology of any free Lie algebra in~$\calC$ is concentrated in degrees $0$ and $1$, a result which is essentially equivalent to the Koszul property of the Lie operad \cite[Th.~4.2.5]{MR1301191}. Thus, if $\TKK$ were the quotient of the symmetric monoidal category of $\mathfrak{sl}_2$-modules by a monoidal ideal, the main conjecture would follow automatically. 

In this paper, we prove that the obvious analogue of the main conjecture does not hold for free associative commutative algebras in~$\TKK$ and holds for free associative algebras in~$\TKK$. We also discuss the main conjecture for free Lie algebras in~$\TKK$, and prove a somewhat surprising result stating  that for the free associative algebra in~$\TKK$ generated by $n$ copies of the adjoint representation, the Lie subalgebra generated by its generators is the Tits--Kantor--Koecher construction of the free \emph{special} Jordan algebra on $n$ generators. 

\section{Preliminaries and recollections}

For simplicity we shall work over a field $\k$ of zero characteristic, though some of our results are available in greater generality. To simplify some formulas, we shall slightly abuse notation for free algebras: our $\Com(V)$ and $\Ass(V)$ will be the free \emph{unital} commutative associative algebra and the free \emph{unital} associative algebra, which, in classical terms, are, respectively, the symmetric algebra $S(V)$ and the tensor algebra $T(V)$. In all other aspects, our operadic conventions correspond to those of the monograph \cite{MR2954392}. For $n\ge 0$, we shall denote by $L(n)$ the irreducible $\mathfrak{sl}_2$-module of the highest weight $n$. In particular, $L(0)$ is the trivial module whose basis element we shall often denote by $x$, and $L(2)$ is the adjoint module $\mathfrak{sl}_2$ with the usual basis $e,f,h$. We shall normalise the Killing form of $\mathfrak{sl}_2$ so that its nonzero values are $K(e,f)=K(f,e)=\frac12$, $K(h,h)=1$. 

\subsection{The Tits--Kantor--Koecher category}

As mentioned in the introduction, this paper studies algebras in the category $\TKK$ whose objects are completely reducible $\mathfrak{sl}_2$-modules that decompose as direct sums of copies of trivial modules and adjoint modules, and whose morphisms are $\mathfrak{sl}_2$-module morphisms. In other words, each object of $\TKK$ is of the form 
 \[
L(0)\otimes A\oplus L(2)\otimes B, 
 \]
where $A$ and $B$ are vector spaces of multiplicities of the corresponding modules. We shall call this category the \emph{Tits--Kantor--Koecher category}.

Let $\calP$ be an operad in $\Vect$. Since objects of $\TKK$ are vector spaces, we can talk about $\calP$-algebras in $\TKK$ in the following straightforward way.

\begin{definition}
A $\calP$-algebra in $\TKK$ is an object $U$ of $\TKK$ equipped with a map 
 \[
\calP(U)\to U
 \]
which is a $\calP$-algebra structure in $\Vect$ and, additionally, a morphism of $\mathfrak{sl}_2$-modules for the obvious extension of the $\mathfrak{sl}_2$-module structure on $U$ to $\calP(U)$. All $\calP$-algebras in $\TKK$ form a category, where morphisms are maps that respect both the algebra structure and the $\mathfrak{sl}_2$-module structure. 
\end{definition}

Moreover, we can talk about free $\calP$-algebras in $\TKK$.

\begin{definition}
The free $\calP$-algebra in $\TKK$ generated by an object $U$, denoted by $\calP^{\TKK}(U)$ is the left adjoint of the forgetful functor from the category of $\calP$-algebras in $\TKK$ to the category $\TKK$. 
\end{definition}

Existence of free algebras is justified by the following obvious result for which we omit the proof. 

\begin{proposition}
The free algebra $\calP^{\TKK}(U)$ is isomorphic to the quotient of the free algebra $\calP(U)$ by the ideal generated by all ``wrong'' isotypic components for the $\mathfrak{sl}_2$-action (components of the form $L(k)$ for $k\ne 0,2$). 
\end{proposition}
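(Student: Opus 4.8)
The plan is to prove this by a universal-property argument. First I would set up notation: write $J = \calP^{\TKK}(U)$ for the free $\calP$-algebra in $\TKK$, and write $I \subseteq \calP(U)$ for the ideal of the free $\calP$-algebra generated by all isotypic components $L(k)$ with $k \neq 0, 2$ occurring in $\calP(U)$ (with its induced $\mathfrak{sl}_2$-structure). The first claim to verify is that $\calP(U)/I$ is an object of $\TKK$, i.e.\ that after quotienting by the ideal generated by the bad components, only copies of $L(0)$ and $L(2)$ remain. This is automatic: by construction the only $\mathfrak{sl}_2$-isotypic components surviving in the quotient are $L(0)$ and $L(2)$, so $\calP(U)/I \in \TKK$; moreover the quotient algebra structure is $\mathfrak{sl}_2$-equivariant since $I$ is an $\mathfrak{sl}_2$-submodule (being generated by $\mathfrak{sl}_2$-submodules and closed under an equivariant multiplication), hence $\calP(U)/I$ is a $\calP$-algebra in $\TKK$.

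Next I would check the universal property. Let $A$ be any $\calP$-algebra in $\TKK$ and let $\varphi\colon U \to A$ be a morphism in $\TKK$ (an $\mathfrak{sl}_2$-module map). By the universal property of the free $\calP$-algebra in $\Vect$, there is a unique $\calP$-algebra map $\tilde\varphi\colon \calP(U) \to A$ extending $\varphi$; because $\varphi$ is $\mathfrak{sl}_2$-equivariant and the $\mathfrak{sl}_2$-action on $\calP(U)$ is the natural extension, $\tilde\varphi$ is automatically $\mathfrak{sl}_2$-equivariant. Since $A \in \TKK$ has no isotypic components of type $L(k)$ for $k \neq 0,2$, and $\tilde\varphi$ is $\mathfrak{sl}_2$-equivariant, it must kill every bad isotypic component of $\calP(U)$ (Schur's lemma: an equivariant map sends an $L(k)$-isotypic piece into the $L(k)$-isotypic piece of the target, which is zero). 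As $\tilde\varphi$ is also an algebra map, its kernel is an ideal, so it contains the ideal $I$ generated by those bad components. Hence $\tilde\varphi$ factors uniquely through $\calP(U)/I$, giving a unique $\calP$-algebra-in-$\TKK$ morphism $\calP(U)/I \to A$ extending $\varphi$. This shows $\calP(U)/I$ represents the same functor as $\calP^{\TKK}(U)$, so the two are canonically isomorphic.

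The main obstacle, such as it is, is really just bookkeeping about $\mathfrak{sl}_2$-equivariance: one must make sure that (i) the $\mathfrak{sl}_2$-action on $\calP(U)$ is indeed compatible with the $\calP$-algebra structure so that ``$\mathfrak{sl}_2$-equivariant ideal'' makes sense, and (ii) the ideal generated by a set of $\mathfrak{sl}_2$-submodules is again an $\mathfrak{sl}_2$-submodule — this follows because multiplication $\calP(U)\otimes\cdots\otimes\calP(U)\to\calP(U)$ is equivariant, so $\mathfrak{sl}_2$ acts on products by the Leibniz-type rule and preserves the span of products involving an element of a bad component. Once those points are in place the argument is essentially formal, which is why the author omits the proof. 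I would present it in two or three sentences invoking Schur's lemma and the universal property, exactly as sketched above.
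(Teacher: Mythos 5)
Your universal-property argument is correct and is exactly the standard argument the authors have in mind when they declare the result obvious and omit the proof: the quotient by the ideal generated by the bad isotypic components lands in $\TKK$ because that ideal contains the sum of all bad components of the completely reducible module $\calP(U)$, and Schur's lemma forces any equivariant algebra map to a $\TKK$-algebra to factor through it. The equivariance bookkeeping you flag (equivariance of $\tilde\varphi = \gamma_A \circ \calP(\varphi)$ and of the generated ideal) is handled correctly, so there is nothing to add.
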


At this point, a crucial warning is an order. Since the category $\TKK$ is not symmetric monoidal (it is a quotient of the symmetric monoidal category of all completely reducible $\mathfrak{sl}_2$-modules supported at finitely many weights by a subcategory which is not a monoidal ideal), $\calP$-algebras in $\TKK$, even though they have an operad behind their definition, are not algebras over an operad! Of course, what they really are is algebras over the free algebra monad $\calP^{\TKK}$. However, the free $\calP$-algebra in $\TKK$ generated by an object $U$, while defined in a sufficiently straightforward way, is not obtained from $U$ by any tractable standard formula.

\subsection{Some recollections on Jordan algebras}

Let us briefly recall some of the results on Jordan algebras and their relationship to Lie algebras in $\TKK$; we refer the reader to \cite{MR4235202} for details. 

A \emph{Jordan algebra} is a commutative not necessarily associative algebra $J$ satisfying the identity $(x^2y)x=x^2(yx)$ for all $x,y\in J$. It is known that for every Jordan algebra $J$, the associator $(x,y,z)=(xy)z-x(yz)$ is a derivation with respect to its argument $y$. A derivation obtained in this way for some $x,z\in J$ is denoted $\partial_{x,z}$ and is called an \emph{inner derivation} of $J$. It is known that the vector space $\Inner(J)$ of all inner derivations is a subalgebra of the Lie algebra of all derivations of $J$ \cite{MR668355}. 

Over a field of characteristic zero, every identity is equivalent to a multilinear identity, and in particular the Jordan identity is equivalent to the multilinear identity
 \[
((xy)z)t+
((yt)z)x+
((xt)z)y=
(xy)(zt)+
(xz)(yt)+
(xt)(yz).
 \] 
This means that Jordan algebras can be described as algebras over an operad, which we shall denote by $\Jord$. It is well known that there is a morphism of operads $\Jord\to\Ass$ sending the generator of the Jordan operad to the symmetrization of the associative product; in more classical terms, for every associative algebra, the operation
\begin{equation}\label{eq:SJordan}
a\circ b=\frac12(ab+ba)
\end{equation}
satisfies the Jordan identity. This morphism has a kernel; in other words, Jordan algebras arising as subalgebras of associative algebras in the way indicated above, always satisfy some extra identities \cite{MR0108524,MR0186708}. The image of that morphism is denoted $\SJord$ and called the special Jordan operad. As for any class of algebras over an operad, one can consider the free Jordan algebra generated by a vector space $V$, denoted by $\Jord(V)$, and the free special Jordan algebra generated by a vector space $V$, denoted by $\SJord(V)$. 
 
\begin{remark}
One important property of the Jordan operad that was established and meaningfully used by the second author and Mathieu \cite{MR4235202} states that it is \emph{cyclic}, which means that the natural $S_n$-action on its $n$-th component extends to a $S_{n+1}$-action (we refer the reader to the foundational paper \cite{MR1358617} for further details on cyclic operads). In fact, in parallel to the long conceptual proof of that result in \cite{MR4235202}, there exists a very short argument proving it: 
inspecting the Jordan identity, one notes that the $S_4$-module generated by it is preserved by the $S_5$-action arising from the standard cyclic operad structure on the free operad with one generator, and hence the Jordan operad is cyclic, being a quotient of the cyclic operad by a cyclically invariant ideal. By a similar argument, the special Jordan operad is cyclic, being a suboperad of a cyclic operad generated by a cyclically invariant subspace of the space of generators. 
\end{remark}

There is a relationship between Jordan algebras and Lie algebras in $\TKK$ going back to the work of Tits \cite{MR0146231} who noted that for each Lie algebra $\mathfrak{g}$ in $\TKK$ one can define a Jordan algebra structure on the vector space of weight $2$ elements in~$\mathfrak{g}$ by the formula $x,y\mapsto \frac12[x,f(y)]$, where $f(-)$ refers to the action of $f\in\mathfrak{sl}_2$ on~$\mathfrak{g}$. This construction is clearly a functor from the category $\TKK$ to the category of Jordan algebras, which we shall call the \emph{Tits functor}. Tits also noticed that there is a construction in the opposite direction, which however is not functorial: to a Jordan algebra $J$, one can associate the object $L(0)\otimes\Inner(J)\oplus L(2)\otimes J$ of $\TKK$, equipped with a Lie algebra structure defined as follows: for two elements of $L(0)\otimes\Inner(J)\cong\Inner(J)$, their Lie bracket is the Lie bracket of derivations, for an element of $L(0)\otimes\Inner(J)\cong\Inner(J)$ and an element of $L(2)\otimes J$, their Lie bracket comes from the action of $\Inner(J)$ on $L(2)\otimes J$, and finally for $u_1\otimes z_1,u_2\otimes z_2\in L(2)\otimes J$, one defines
 \[
[u_1\otimes z_1,u_2\otimes z_2]:=K(u_1,u_2)\partial_{z_1,z_2}+[u_1,u_2]\otimes (z_1z_2).
 \]
That construction was further investigated and generalized by Kantor \cite{MR0175941} and Koecher \cite{MR214631}, and is usually referred to as the \emph{Tits--Kantor--Koecher construction}. 

A functorial version of the Tits--Kantor--Koecher construction was found by Allison and Gao, see \cite{MR1402322,MR3169564}. It is defined as follows. Given a Jordan algebra $J$, we define the \emph{Tits--Allison--Gao functor}
 \[
\TAG(J):=L(0)\otimes \calB(J)\oplus L(2)\otimes J,
 \]
where $\calB(J):=\Lambda^2(J)/\k\{z\wedge z^2\colon z\in J\}$. Note that $\partial_{z,z^2}(y)=(z,y,z^2)=0$ due to the Jordan identity, so 
the natural map $\Lambda^2(J)\to \Inner(J)$ sending $x\wedge y$ to $\partial_{x,y}$ induces a surjective map $\calB(J)\to \Inner(J)$, and $\calB(J)$ can be viewed as a natural functorial replacement of the non-functorial $\Inner(J)$ in the Tits--Kantor--Koecher construction. The map $\calB(J)\to \Inner(J)$ leads to an action of $\calB(J)$ on $J$ by derivations, and one can show that if one extends it to $\Lambda^2(J)$, that action induces a Lie algebra structure on $\calB(J)$, thus leading to a Lie algebra structure on $\TAG(J)$ analogous to that of the Tits--Kantor--Koecher construction. It is shown in \cite{MR4235202} that the Tits functor and the Tits--Allison--Gao functor form a pair of adjoint functors.  

\subsection{Gr\"obner--Shirshov bases}

In this article, we make extensive use of Gr\"obner--Shirshov bases in all kinds of algebras that we consider. The purpose of such bases is to consider particularly useful systems of generators of ideals in free algebras, allowing one to have normal forms for elements in the quotient by an ideal.
We note that in different types of algebras, one encounters two qualitatively different situations. For commutative associative algebras and associative algebras, free algebras admit bases (of commutative monomials and of words, respectively), for which the product of two basis elements is always another basis element, whereas for Lie algebras and superalgebras, it is not possible to introduce a basis for which the Lie bracket of two basis elements is always a basis element, so that much more intricate considerations are required. To distinguish between such situations, we shall talk about \emph{Gr\"obner bases} in contexts of the first kind (and call the corresponding basis elements \emph{monomials}) and about \emph{Shirshov bases} in context of the second kind, though, as we see, Shirshov bases of ideals in Lie algebras and superalgebras can be related to Gr\"obner bases of ideals in the corresponding universal enveloping algebras. In this section, we only give very brief recollections, referring the reader to \cite{MR1700511,MR1733167,MR3642294,MR1360005} for details. 

Let us begin with the combinatorially more transparent case of commutative associative algebras and associative algebras. A total order of monomials in the free algebra is said to be \emph{admissible} if it is a well-order, and the product is an increasing function of its arguments: replacing one of the monomials in the product by a greater one increases the result. Given an admissible order of the free algebra, one can define a \emph{Gr\"obner basis} of an ideal $I$ as a subset $G\subset I$ for which the leading monomial of every element of $I$ is divisible by a leading monomial of an element of~$G$. The primary reason to look for Gr\"obner bases is dictated by considerations of linear algebra: a Gr\"obner basis for an ideal gives extensive information on the quotient modulo $I$. A monomial is said to be \emph{normal} with respect to $G$ if it is not divisible by any of the leading monomials of elements of~$G$. It is easy to show that the normal monomials with respect to any set of generators of an ideal $I$ always form a spanning set of the quotient modulo $I$. One can prove that $G$ is a Gr\"obner basis if and only if the cosets of monomials that are normal with respect to $G$ form a basis of the quotient modulo $I$.

In the Lie case, we shall mostly encounter Lie superalgebras, and so we discuss Shirshov bases in that generality. We start with explaining what kind of monomial bases in free algebras we consider. Suppose that $X$ is a set equipped with a well-order. We can consider the free monoid $\langle X\rangle$ generated by $X$, consisting of all words in the alphabet $X$ with the associative product of each two elements given by concatenation. We shall moreover assume that there is a parity function $X\to \mathbb{Z}/2\mathbb{Z}=\{0,1\}$ allowing to write $X=X_0\sqcup X_1$; we extend the parity to $\langle X\rangle$ additively, so that we can talk about even and odd words in the alphabet~$X$. A non-empty word $w$ is said to be a \emph{Lyndon--Shirshov word} if it is the strictly largest one (with respect to the graded lexicographic order of $\langle X\rangle$ induced by the order of $X$) among all its cyclic shifts. Furthermore, a non-empty word $w$ is said to be a \emph{super-Lyndon--Shirshov word} if it is a Lyndon--Shirshov word or a square of an odd Lyndon--Shirshov word (a square of a Lyndon--Shirshov word clearly is not a Lyndon--Shirshov word itself). 

To deal with normal forms, we invoke universal enveloping algebras. For a Lie superalgebra $\mathfrak{g}$ with generators $X$ and relations $R$, let us interpret elements of $R$ as linear combinations of commutators in the free associative algebra $\k\langle X\rangle$, so that these elements are defining relations of the universal enveloping algebra $U(\mathfrak{g})$. Then if $R$ is a Gr\"obner basis of those defining relations of~$U(\mathfrak{g})$, the Lie superalgebra $\mathfrak{g}$ has a basis whose leading terms, once we expand everything inside $U(\mathfrak{g})$ into combinations of associative monomials, are super-Lyndon--Shirshov words that are normal with respect to $G$.   

For algebras in $\TKK$, we shall always choose generators in a way compatible with the $\mathfrak{sl}_2$-weights, so that for $U=L(0)\otimes A\oplus L(2)\otimes B$ we choose a basis $\{x_i\colon i=1,\ldots,\dim(A)\}\sqcup \{e_j, f_j, h_j \colon j=1,\ldots,\dim(B)\}$ (we are going to assume our algebras finitely generated; most of our arguments require only a very mild modification for the infinite number of generators).  

\section{Free commutative associative algebras}

In this section, we shall describe free commutative associative algebras in the category $\TKK$, and show that the main conjecture does not hold for them. 

We shall start by presenting free commutative associative algebras in $\TKK$ by generators and relations. 

\begin{proposition}\label{prop:quadrelCom}
Suppose that $U=L(0)\otimes A\oplus L(2)\otimes B$. The algebra $\Com^{\TKK}(U)$ is the quotient of $S(U)$ by the relations given by the $\mathfrak{sl}_2$-submodule in $S^2(U)$ generated by $S^2(e\otimes B)$, where $e$ is the highest weight vector of $L(2)$. In particular, we have an isomorphism of commutative associative algebras
 \[
\Com^{\TKK}(U)\cong S(A)\otimes\Com^{\TKK}(L(2)\otimes B).
 \]
\end{proposition}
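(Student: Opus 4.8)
The plan is to derive the presentation from the description of free $\calP$-algebras in $\TKK$ as quotients of $\calP(U)$ by the ideal generated by the wrong isotypic components (proved above), specialised to $\calP=\Com$. Write $S(U)=\Com(U)$, and let $I\subseteq S(U)$ be the ideal generated by all isotypic components $L(k)$, $k\ne 0,2$, occurring in $S(U)$; since $U$ is supported in even weights, so is $S(U)$, so $I$ is generated by the components $L(k)$ with $k\ge 4$. The claim amounts to showing $I=(M)$, where $M\subseteq S^2(U)$ is the $\mathfrak{sl}_2$-submodule generated by $S^2(e\otimes B)$; once this is done, the asserted tensor factorisation follows formally.

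First I would locate the degree-two relations. A standard computation with $\mathfrak{sl}_2$-modules gives
\[
S^2(U)\cong L(0)\otimes S^2(A)\ \oplus\ L(2)\otimes(A\otimes B)\ \oplus\ S^2(L(2))\otimes S^2(B)\ \oplus\ \Lambda^2(L(2))\otimes\Lambda^2(B),
\]
and, since $S^2(L(2))\cong L(4)\oplus L(0)$ and $\Lambda^2(L(2))\cong L(2)$, the only wrong summand of $S^2(U)$ is $L(4)\otimes S^2(B)$. As $4$ is the top weight occurring in $S^2(U)$ and it occurs only in that summand, with multiplicity $\dim S^2(B)$, the subspace $S^2(e\otimes B)$ --- manifestly the entire weight-$4$ part of $S^2(U)$ --- is precisely the highest weight space of $L(4)\otimes S^2(B)$. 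Therefore $M=L(4)\otimes S^2(B)$ is exactly the degree-$2$ homogeneous component of $I$, and in particular $(M)\subseteq I$.

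The reverse inclusion $I\subseteq(M)$ is, I expect, the only step requiring an idea, and here a weight count does the job. Choose the weight-compatible generators $x_i,e_j,f_j,h_j$ of $U$. A highest weight vector of weight $k\ge 4$ in $S(U)$ is a linear combination of monomials all of weight $k$; in such a monomial each $x_i$ and each $h_j$ contributes weight $0$, each $e_j$ weight $+2$, and each $f_j$ weight $-2$, so the number of $e$-letters minus the number of $f$-letters equals $k/2\ge 2$. In particular the monomial contains at least two $e$-letters, hence is divisible by $e_ie_j$ for some $i\le j$; as every such $e_ie_j$ lies in $S^2(e\otimes B)\subseteq M$, the monomial, and with it the highest weight vector, lies in $(M)$. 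Since $(M)$ is an $\mathfrak{sl}_2$-stable ideal, it contains the whole isotypic component generated by such a highest weight vector, and these highest weight vectors account for all the wrong components; hence $I\subseteq(M)$, and $\Com^{\TKK}(U)\cong S(U)/(M)$.

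For the factorisation, note that $S(U)\cong S(A)\otimes S(L(2)\otimes B)$ as algebras and that $M$ lies entirely in the second tensor factor, so $(M)=S(A)\otimes\bigl(M\cdot S(L(2)\otimes B)\bigr)$ and the quotient is $S(A)\otimes\bigl(S(L(2)\otimes B)/(M)\bigr)$, the second factor being $\Com^{\TKK}(L(2)\otimes B)$ by applying the already-proven statement to the object $L(2)\otimes B$. What I want to stress is that, by contrast with most of the computations in this paper, no Gr\"obner or Shirshov basis is needed here: the elementary observation that weight $\ge 4$ forces at least two occurrences of $e$ lets the single quadratic relation module absorb all higher wrong components directly.
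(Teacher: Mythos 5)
Your proof is correct and follows essentially the same route as the paper's: both identify the forced quadratic relations as $S^2(e\otimes B)$ together with its $\mathfrak{sl}_2$-orbit, and both rest on the elementary observation that weight $\ge 4$ in a monomial forces at least two $e$-letters, hence divisibility by some $e_ie_j$. The paper phrases this last step dually --- the quotient by the quadratic relations is spanned by monomials with at most one $e$-letter, hence lies in $\TKK$ --- but this is the same argument.
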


\begin{proof}
For a basis $\{x_i\colon i=1,\ldots,\dim(A)\}\sqcup \{e_j, f_j, h_j \colon j=1,\ldots,\dim(B)\}$ of $U$, we see that our relations must contain all monomials $e_ie_j$ (for there are no elements of weight $4$ in objects of $\TKK$), which are precisely the basis elements of the vector space $S^2(e\otimes B)$. Since we work with commutative associative algebras, the quotient by these relations is spanned by monomials that contain at most one generator $e_i$. Consequently, the quotient by the relations given by $\mathfrak{sl}_2$-submodule generated by these elements already has no elements of weight $4$ or more, and hence is an object of $\TKK$, it is the free commutative associative algebra. 

In particular, we do not impose any relations on the generators $x_i$, so 
 \[
\Com^{\TKK}(U)\cong S(A)\otimes\Com^{\TKK}(L(2)\otimes B).
 \]
\end{proof}

Proposition \ref{prop:quadrelCom} implies that in order to describe the free commutative associative algebras in $\TKK$, it is essentially enough to describe free commutative associative algebras generated by an object of the form $L(2)\otimes B$, that is by several copies of the adjoint $\mathfrak{sl}_2$-module.  We shall use Gr\"obner bases to study those algebras.

\begin{proposition}\label{prop:descriptionCom}\leavevmode
We have
 \[
\Com^{\TKK}(L(2)\otimes B)\cong L(0)\oplus L(2)\otimes B\oplus  (L(0)\otimes S^2(B)\oplus L(2)\otimes\Lambda^2(B)) \oplus L(0)\otimes \Lambda^3(B),
 \] 
where each product of more than three generators from $L(2)\otimes B$ vanishes, and products of two or three generators are given, respectively, by the map
 \[
S^2(L(2)\otimes B)\to L(0)\otimes S^2(B)\oplus L(2)\otimes \Lambda^2(B)
 \] 
sending $(u_1\otimes b_1)(u_2\otimes b_2)$ to $K(u_1,u_2)\otimes (b_1b_2)+[u_1,u_2]\otimes(b_1\wedge b_2)$ and by the map
\[
S^3(L(2)\otimes B)\to L(0)\otimes \Lambda^3(B)
 \] 
sending $(u_1\otimes b_1)(u_2\otimes b_2)(u_3\otimes b_3)$ to $K([u_1,u_2],u_3)\otimes (b_1\wedge b_2\wedge b_3)$.
\end{proposition}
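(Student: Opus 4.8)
The plan is to compute $A:=\Com^{\TKK}(L(2)\otimes B)$ one degree at a time, combining Proposition~\ref{prop:quadrelCom} with elementary $\mathfrak{sl}_2$-representation theory; a Gr\"obner basis computation in the spirit of the previous subsection would also work but is more cumbersome. By Proposition~\ref{prop:quadrelCom}, $A$ is the quotient of $S(L(2)\otimes B)$ by the ideal $I$ generated by the $\mathfrak{sl}_2$-submodule $I_2\subset S^2(L(2)\otimes B)$ generated by $S^2(e\otimes B)$. From the decomposition $S^2(L(2)\otimes B)=(L(0)\oplus L(4))\otimes S^2(B)\oplus L(2)\otimes\Lambda^2(B)$ one sees that the weight-$4$ subspace of $S^2(L(2)\otimes B)$ lies entirely in $L(4)\otimes S^2(B)$ and coincides with $S^2(e\otimes B)$; hence $I_2$ is precisely the isotypic component $L(4)\otimes S^2(B)$, spanned by the monomials $e_ie_j$ together with their images under iterated application of the lowering operator $f$, which up to rescaling are $e_ih_j+e_jh_i$, $e_if_j+e_jf_i-h_ih_j$, $f_ih_j+f_jh_i$ and $f_if_j$. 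As no relations occur in degrees $0$ and $1$, this immediately gives $A_0=L(0)$ and $A_1=L(2)\otimes B$.

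For degree $2$ the above yields $A_2=S^2(L(2)\otimes B)/I_2\cong L(0)\otimes S^2(B)\oplus L(2)\otimes\Lambda^2(B)$, and to obtain the stated multiplication I would introduce the $\mathfrak{sl}_2$-equivariant map $m\colon S^2(L(2)\otimes B)\to L(0)\otimes S^2(B)\oplus L(2)\otimes\Lambda^2(B)$ sending $(u_1\otimes b_1)(u_2\otimes b_2)$ to $K(u_1,u_2)\otimes b_1b_2+[u_1,u_2]\otimes b_1\wedge b_2$. It is well defined ($K$ is symmetric and the bracket antisymmetric) and surjective ($K$ is nondegenerate and $u\wedge v\mapsto[u,v]$ maps $\Lambda^2\mathfrak{sl}_2$ onto $\mathfrak{sl}_2$); by Schur's lemma it kills the $L(4)$-isotypic component $I_2$ and is injective on the remaining isotypic components, so $\ker m=I_2$ and $m$ realizes the claimed product on $A_2$. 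The normalizations $K(e,f)=\tfrac12$, $K(h,h)=1$ make the constant in $e_if_j+e_jf_i-h_ih_j$ come out as stated, which one checks on that single relation.

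The crux is degree $3$. I would first observe that $S^3(L(2)\otimes B)$ involves only the types $L(0),L(2),L(4),L(6)$ (all its weights being even and $\le 6$) and that its $L(0)$-multiplicity equals $\binom{\dim B}{3}$: by the Cauchy decomposition $S^3(V\otimes W)\cong\bigoplus_{\lambda\vdash 3}S^\lambda(V)\otimes S^\lambda(W)$ (with $S^\lambda$ the Schur functor) its space of invariants is $\bigoplus_\lambda S^\lambda(\mathfrak{sl}_2)^{\mathfrak{sl}_2}\otimes S^\lambda(B)$, and since $((\mathfrak{sl}_2)^{\otimes 3})^{\mathfrak{sl}_2}$ is one-dimensional and spanned by the \emph{totally antisymmetric} Cartan $3$-form $(u_1,u_2,u_3)\mapsto K([u_1,u_2],u_3)$, only the partition $(1^3)$ contributes and $S^3(L(2)\otimes B)^{\mathfrak{sl}_2}\cong\Lambda^3(\mathfrak{sl}_2)^{\mathfrak{sl}_2}\otimes\Lambda^3(B)\cong\Lambda^3(B)$. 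The same antisymmetry shows that $(u_1\otimes b_1)(u_2\otimes b_2)(u_3\otimes b_3)\mapsto K([u_1,u_2],u_3)\otimes b_1\wedge b_2\wedge b_3$ descends to an $\mathfrak{sl}_2$-equivariant surjection $\Phi\colon S^3(L(2)\otimes B)\to L(0)\otimes\Lambda^3(B)$ whose kernel, by the multiplicity count and Schur's lemma, is exactly the sum of the $L(2),L(4),L(6)$-isotypic components. I would then prove $I_3=\ker\Phi$, where $I_3$ is the degree-$3$ part of $I$, in two steps. For $I_3\subseteq\ker\Phi$: composing $\Phi$ with the multiplication map $I_2\otimes(L(2)\otimes B)\to S^3(L(2)\otimes B)$ gives an equivariant map out of $L(4)\otimes S^2(B)\otimes(L(2)\otimes B)$, a module with no $L(0)$-isotypic component because $L(4)\otimes L(2)=L(6)\oplus L(4)\oplus L(2)$, hence the zero map. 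For $\ker\Phi\subseteq I_3$: since $\ker\Phi$ is generated as an $\mathfrak{sl}_2$-module by its vectors of positive weight, it is enough to check that $I_3$ contains every weight-$2$, weight-$4$ and weight-$6$ vector of $S^3(L(2)\otimes B)$; these weight spaces are spanned by the monomials $e_ie_je_k$, $e_ie_jh_k$, $e_ie_jf_k$ and $e_ih_jh_k$, the first three of which lie in $I_3$ since $e_ie_j\in I_2$, while $e_ih_jh_k=(e_ie_jf_k+e_ie_kf_j)-(e_jf_k+e_kf_j-h_jh_k)e_i\in I_3$ using the quadratic relation $e_jf_k+e_kf_j-h_jh_k\in I_2$. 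This gives $A_3\cong S^3(L(2)\otimes B)/\ker\Phi\cong L(0)\otimes\Lambda^3(B)$, with the product of three generators given by $\Phi$.

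Finally, for vanishing in degrees $\ge 4$: since $A$ is generated in degree $1$ it suffices to show $A_4=A_1\cdot A_3=0$. The product $A_1\otimes A_3\to A_4$ is equivariant with source $L(2)\otimes(B\otimes\Lambda^3(B))$, so $A_1\cdot A_3$ is $L(2)$-isotypic and therefore generated by its weight-$2$ part; the latter is spanned by the classes of $e_i\cdot(e_af_bh_c)=e_ie_af_bh_c$, where the classes of the $e_af_bh_c$ span $A_3$ because $\Phi(e_af_bh_c)=K([e,f],h)\,b_a\wedge b_b\wedge b_c=b_a\wedge b_b\wedge b_c$. Since $e_ie_a\in I_2$, we have $e_ie_af_bh_c\in I_2\cdot S^2(L(2)\otimes B)\subseteq I_4$, so that weight-$2$ part vanishes and hence $A_1\cdot A_3=0$; consequently $A_n=0$ for all $n\ge4$. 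I expect the degree-$3$ step to be the real obstacle: one has to control simultaneously the $L(0)$-multiplicity of $S^3(L(2)\otimes B)$ (via the Cauchy decomposition and the antisymmetry of the Cartan $3$-form) and the size of the degree-$3$ part of the defining ideal, the one genuinely non-routine ingredient being the membership $e_ih_jh_k\in I_3$.
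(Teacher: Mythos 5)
Your argument is correct, and it takes a genuinely different route from the paper. The paper's proof is combinatorial: it fixes an ordering of the generators $e_j,h_j,f_j$, exhibits the leading terms of the quadratic relations, counts the normal monomials ($1$, $\dim B$, $2\binom{\dim B}{2}+\dim(B)^2$, $\binom{\dim B}{3}$, then nothing) to get upper bounds, and matches them against a lower bound obtained by checking directly that the proposed multiplication table defines a commutative associative algebra generated by $L(2)\otimes B$ (the only nontrivial point there being associativity of the triple product, which follows from invariance of the Killing form). You instead compute the quotient degree by degree using $\mathfrak{sl}_2$-isotypic decompositions: the identification $I_2=L(4)\otimes S^2(B)$ via the weight-$4$ argument, the Cauchy decomposition of $S^3(L(2)\otimes B)$ together with the total antisymmetry of the Cartan $3$-form to pin down the $L(0)$-multiplicity, the explicit equivariant surjections $m$ and $\Phi$ with kernels identified by Schur's lemma, and the positive-weight generation argument (with the one genuine computation $e_ih_jh_k\in I_3$) to show $I_3=\ker\Phi$. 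Your route has the advantage that associativity of the resulting product is automatic (it is inherited from the polynomial ring rather than verified by hand), and it explains representation-theoretically \emph{why} the answer is $S^2(B)$, $\Lambda^2(B)$, $\Lambda^3(B)$. What it does not deliver is the byproduct that the quadratic relations form a Gr\"obner basis for the stated order, which the paper extracts from its proof and reuses in Corollary~\ref{cor:comKoszul} to establish Koszulness; if one follows your proof, that corollary would need the Gr\"obner basis computation (or another Koszulness argument) supplied separately.
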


\begin{proof}
According to Proposition~\ref{prop:quadrelCom}, the relations of the algebra $\Com^{\TKK}(L(2)\otimes B)$ are given by the $\mathfrak{sl}_2$-submodule generated by $S^2(e\otimes B)$. In terms of the generators $\{e_j, f_j, h_j \colon j=1,\ldots,\dim(B)\}$, this submodule has a basis of elements
 \[
e_i e_j, \ \  f_i f_j,\ \
h_i e_j+h_j e_i, \ \  h_i f_j+h_j f_i,\ \
f_i e_j+e_i f_j - h_i h_j
 \]
for all $i\le j\in \{1,\ldots,\dim(B)\}$.

Let us consider the order of generators 
 \[
e_{\dim(B)}<\ldots<e_1<h_{\dim(B)}<\ldots<h_1<f_{\dim(B)}<\ldots<f_1,
 \]
and the inverse lexicographical order associated to it; to compare two monomials in our generators with respect to that order, we choose the smallest generator in which they differ, and declare the monomials with the \emph{smaller} exponent to be \emph{larger}. Let us reproduce the above relations with their leading monomials underlined:
\begin{gather*}
\underline{e_i e_j}, \ \  \underline{f_i f_j}, i\le j\in \{1,\ldots,\dim(B)\},\\
h_i e_j+\underline{h_j e_i}, \ \  \underline{h_i f_j}+h_j f_i, i<j\in \{1,\ldots,\dim(B)\},\\
2\underline{h_i e_i}, \ \ 2\underline{h_i f_i}, i\in \{1,\ldots,\dim(B)\},\\
f_i e_j+e_i f_j - \underline{h_i h_j}, i\le j\in \{1,\ldots,\dim(B)\}.
\end{gather*}
This means that the normal quadratic monomials in this case are  
\begin{gather*}
h_ie_j, \ \ h_jf_i, i<j\in \{1,\ldots,\dim(B)\},\\
e_if_j, i,j\in \{1,\ldots,\dim(B)\}.
\end{gather*}
Consequently, the degree three monomials that are normal with respect to the leading terms of quadratic relations are precisely all the monomials 
 \[
f_ih_je_k,  i<j<k\in \{1,\ldots,\dim(B)\},
 \]
and there are no such normal monomials of degree four and higher. Moreover, since the ideal of relations of the algebra $\Com^{\TKK}(L(2)\otimes B)$ is generated by the quadratic relations above, the cosets of these normal monomials span the algebra $\Com^{\TKK}(L(2)\otimes B)$ and as such give upper bounds on dimensions of its homogeneous components. Thus, we have the following upper bounds on dimensions of nonzero homogeneous components of that algebra:
 \[
1, \dim(B), 2\binom{\dim(B)}2+\dim(B)^2, \binom{\dim(B)}3.
 \]

Let us now note that the product on 
 \[
L(0)\oplus L(2)\otimes B\oplus  (L(0)\otimes S^2(B)\oplus L(2)\otimes\Lambda^2(B)) \oplus L(0)\otimes \Lambda^3(B)
 \]
defined in the statement of the proposition is commutative and associative; the only nontrivial case is the product of three generators, where the associativity is an immediate consequence of the invariance of the Killing form. Moreover, this commutative associative algebra is generated by $L(2)\otimes B$ and thus admits a surjective map from the free algebra $\Com^{\TKK}(L(2)\otimes B)$, giving the lower bounds 
 \[
1, \dim(B), \binom{\dim(B)+1}2+3\binom{\dim(B)}2, \binom{\dim(B)}3,
 \]
on dimensions of homogeneous components of that algebra. Since we have 
 \[
\binom{\dim(B)+1}2+3\binom{\dim(B)}2=2\binom{\dim(B)}2+\dim(B)^2,
 \]
the two algebras are isomorphic.
\end{proof}

In general, computing homology of commutative algebras presented by generators and relations is not an easy task. However, we shall now see that the algebras $\Com^{\TKK}(U)$ possess sufficiently good homological properties. 

\begin{corollary}\label{cor:comKoszul}
For every object $U$ of the category $\TKK$, the free algebra $\Com^{\TKK}(U)$ is a quadratic Koszul algebra.
\end{corollary}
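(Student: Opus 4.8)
The plan is to reduce everything to an explicit combinatorial computation on the quadratic presentations already obtained. By Proposition~\ref{prop:quadrelCom}, we have $\Com^{\TKK}(U)\cong S(A)\otimes\Com^{\TKK}(L(2)\otimes B)$, and the tensor product of quadratic Koszul algebras is again quadratic Koszul (the Koszul dual of a tensor product is the tensor product of Koszul duals, and the cohomology splits accordingly). Since $S(A)$ is the polynomial algebra, which is the prototypical Koszul algebra, it suffices to prove that $\Com^{\TKK}(L(2)\otimes B)$ is quadratic Koszul. The quadraticity is already visible: Proposition~\ref{prop:quadrelCom} says the defining ideal is generated by the $\mathfrak{sl}_2$-submodule of $S^2(U)$ generated by $S^2(e\otimes B)$, so the relations live entirely in degree~$2$.

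For the Koszul property, I would use the Gröbner-basis criterion recalled in the preliminaries: an algebra with a quadratic Gröbner basis (with respect to an admissible order) is Koszul. So the first step is to verify that the quadratic relations listed in the proof of Proposition~\ref{prop:descriptionCom} already form a Gröbner basis with respect to the inverse lexicographic order used there. This is exactly what the dimension count in that proof establishes: the cosets of the normal monomials (those $h_ie_j$, $h_jf_i$ with $i<j$, and all $e_if_j$ in degree~$2$; the $f_ih_je_k$ with $i<j<k$ in degree~$3$; nothing in higher degree) span the quotient, and the independently-proved lower bounds on the dimensions of the homogeneous components match these upper bounds exactly. Hence the normal monomials form a \emph{basis} of the quotient, which by the criterion recalled in Section~2.3 is precisely the statement that the given generating set is a Gröbner basis. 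Since that generating set is quadratic, the Gröbner basis is quadratic, and therefore $\Com^{\TKK}(L(2)\otimes B)$ is Koszul.

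Combining the two steps: $\Com^{\TKK}(U)\cong S(A)\otimes\Com^{\TKK}(L(2)\otimes B)$ is a tensor product of two quadratic Koszul algebras, hence quadratic Koszul. I do not anticipate a genuine obstacle here — the substance of the argument is already contained in Proposition~\ref{prop:descriptionCom}, and the corollary is essentially a repackaging of its proof through the Gröbner-basis-implies-Koszul principle. The only point requiring a little care is making sure the order chosen in the proof of Proposition~\ref{prop:descriptionCom} is genuinely admissible (a well-order compatible with multiplication) so that the Gröbner-basis machinery applies, and that the tensor-product-of-Koszul-algebras step is invoked in the non-unital-versus-unital convention consistent with the rest of the paper; both are routine.
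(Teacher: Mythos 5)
Your argument is correct and follows essentially the same route as the paper: reduce via the tensor-product decomposition of Proposition~\ref{prop:quadrelCom} to $\Com^{\TKK}(L(2)\otimes B)$, and then invoke the quadratic Gr\"obner basis exhibited in the proof of Proposition~\ref{prop:descriptionCom} together with the fact that a quadratic Gr\"obner basis implies Koszulness. Your extra remark that the matching upper and lower dimension bounds in that proof are what certify the Gr\"obner basis property is exactly the content the paper relies on implicitly.
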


\begin{proof}
First of all, we recall that according to Proposition \ref{prop:quadrelCom}, we have
\[
\Com^{\TKK}(U)\cong S(A)\otimes\Com^{\TKK}(L(2)\otimes B),
 \] 
so since the symmetric algebra is well known to be Koszul and since the tensor product of Koszul algebras is Koszul \cite[Chapter 3, Corollary 1.2]{MR2177131}, it is enough to show that all algebras $\Com^{\TKK}(L(2)\otimes B)$ are Koszul. 

As we saw in the proof of Proposition \ref{prop:descriptionCom}, for a certain ordering of monomials, every algebra $\Com^{\TKK}(L(2)\otimes B)$ admits a quadratic Gr\"obner basis. It follows from \cite[Chapter 4, Sec.~8]{MR2177131} that these algebras are Koszul. 
\end{proof}

It is easy to see that the Koszul dual algebra of a commutative associative algebra $A$ is the universal enveloping algebra of a Lie superalgebra, and, if the algebra $A$ is Koszul, that latter Lie superalgebra is the cohomology of $A$, and its linear dual coalgebra is the homology of $A$. Thus, to test the main conjecture in the case of free commutative associative algebras, we should compute the underlying $\mathfrak{sl}_2$-module of the corresponding Lie superalgebra, which we shall do using Shirshov bases for Lie superalgebras. 

\begin{theorem}
Let $U=L(2)$. We have
\begin{gather*}
H_1(\Com^{\TKK}(U))^{\TKK}\cong L(2),\\ 
H_2(\Com^{\TKK}(U))^{\TKK}=0, H_3(\Com^{\TKK}(U))^{\TKK}=0, \\
H_4(\Com^{\TKK}(U))^{\TKK}\cong L(2).
\end{gather*} 
Moreover, for every object $W$ of the category $\TKK$ that contains at least one copy of the adjoint module, we have $H_4(\Com^{\TKK}(W))^{\TKK}\ne 0$. 
\end{theorem}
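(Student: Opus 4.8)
The plan is to treat the case $U=L(2)$ directly through commutative Koszul duality, and then to deduce the general statement by a formal retract argument. Set $A=\Com^{\TKK}(L(2))$. By Proposition~\ref{prop:descriptionCom} with $\dim B=1$, the underlying $\mathfrak{sl}_2$-module of $A$ is $L(0)\oplus L(2)\oplus L(0)$ concentrated in internal degrees $0,1,2$, and by Corollary~\ref{cor:comKoszul} the algebra $A$ is quadratic and Koszul. As recalled just above the theorem, $A^{!}$ is then the universal enveloping algebra of a nonnegatively graded Lie superalgebra $\mathfrak{g}=\bigoplus_{i\ge1}\mathfrak{g}_i$ in which each $\mathfrak{g}_i$ is concentrated in homological and internal degree $i$ (the two gradings coincide because $A$ is Koszul) and has super-parity $i\bmod 2$, and $H_i(\Com^{\TKK}(L(2)))\cong\mathfrak{g}_i^{*}$ for $i\ge1$; since every finite-dimensional $\mathfrak{sl}_2$-module is self-dual, it suffices to identify $\mathfrak{g}_1,\ldots,\mathfrak{g}_4$ as $\mathfrak{sl}_2$-modules.

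First I would record the $\mathfrak{sl}_2$-equivariant Hilbert series of $A^{!}$: exactness of the Koszul complex of $A$ shows that, in the ring of power series over the representation ring of $\mathfrak{sl}_2$, the series $\sum_i t^i[A^{!}_i]$ is the inverse of $[L(0)]-[L(2)]t+[L(0)]t^2$, namely $\sum_{n\ge0}\bigl([L(2)]t-[L(0)]t^2\bigr)^n$, and the Clebsch--Gordan rule turns this into
\begin{gather*}
[A^{!}_2]=[L(4)]+[L(2)],\qquad [A^{!}_3]=[L(6)]+2[L(4)]+[L(2)]+[L(0)],\\
[A^{!}_4]=[L(8)]+3[L(6)]+3[L(4)]+3[L(2)]+[L(0)].
\end{gather*}
Then, using the Poincar\'e--Birkhoff--Witt identification of $A^{!}=U(\mathfrak{g})$, as a bigraded vector space, with $\Lambda(\mathfrak{g}_{\mathrm{odd}})\otimes S(\mathfrak{g}_{\mathrm{even}})$ (exterior algebra on the odd-degree part, symmetric algebra on the even-degree part), I would peel off the $\mathfrak{g}_i$ from the lowest degree upward: $\mathfrak{g}_1=L(2)$; then $\Lambda^2\mathfrak{g}_1\oplus\mathfrak{g}_2\cong A^{!}_2$ together with $\Lambda^2 L(2)=L(2)$ gives $\mathfrak{g}_2=L(4)$; then $\Lambda^3\mathfrak{g}_1\oplus(\mathfrak{g}_1\otimes\mathfrak{g}_2)\oplus\mathfrak{g}_3\cong A^{!}_3$ together with $\Lambda^3 L(2)=L(0)$ and $L(2)\otimes L(4)=L(6)\oplus L(4)\oplus L(2)$ gives $\mathfrak{g}_3=L(4)$; and finally, since $\Lambda^4 L(2)=0$,
\[
\bigl(\Lambda^2\mathfrak{g}_1\otimes\mathfrak{g}_2\bigr)\oplus S^2\mathfrak{g}_2\oplus\bigl(\mathfrak{g}_1\otimes\mathfrak{g}_3\bigr)\oplus\mathfrak{g}_4\cong A^{!}_4,
\]
and substituting $L(2)\otimes L(4)=L(6)\oplus L(4)\oplus L(2)$ (twice) and $S^2 L(4)=L(8)\oplus L(4)\oplus L(0)$ forces $\mathfrak{g}_4=L(6)\oplus L(2)$. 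Dualising and applying the truncation functor $(-)^{\TKK}$ then yields $H_1^{\TKK}\cong L(2)$, $H_2^{\TKK}\cong L(4)^{\TKK}=0$, $H_3^{\TKK}\cong L(4)^{\TKK}=0$ and $H_4^{\TKK}\cong(L(6)\oplus L(2))^{\TKK}\cong L(2)$, which is the first part of the theorem. (Equivalently, one may present $\mathfrak{g}$ as the free Lie superalgebra on $L(2)$ placed in odd degree $1$ modulo the ideal generated by the one-dimensional invariant in its degree-two component, and read the $\mathfrak{g}_i$ off the super-Lyndon--Shirshov normal words of a Gr\"obner basis completing the single quadratic relation of $A^{!}$; this gives the same answer.)

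For the last assertion I would argue formally. An object $W$ of $\TKK$ that contains a copy of the adjoint module splits as $W\cong L(2)\oplus W'$, so there are morphisms $L(2)\hookrightarrow W\twoheadrightarrow L(2)$ in $\TKK$ whose composite is the identity of $L(2)$. Applying the functor $\Com^{\TKK}$, then the functorial homology of the underlying commutative algebra, and then the exact truncation functor $(-)^{\TKK}$ produces $\mathfrak{sl}_2$-module maps
\[
H_4(\Com^{\TKK}(L(2)))^{\TKK}\ \longrightarrow\ H_4(\Com^{\TKK}(W))^{\TKK}\ \longrightarrow\ H_4(\Com^{\TKK}(L(2)))^{\TKK}
\]
with composite the identity; hence $L(2)\cong H_4(\Com^{\TKK}(L(2)))^{\TKK}$ is a direct summand of $H_4(\Com^{\TKK}(W))^{\TKK}$, which is therefore nonzero.

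The step I expect to take the most care is fixing the commutative Koszul-duality dictionary precisely enough to apply it: the coincidence of homological and internal degree on $A^{!}$, the super-parity convention, and the identification $H_i\cong\mathfrak{g}_i^{*}$ in the normalisation of ``homology'' relevant to the main conjecture, so that the Hilbert-series inversion and the Poincar\'e--Birkhoff--Witt bookkeeping legitimately compute the invariants in question. Once that is in place, the representation-theoretic part --- Clebsch--Gordan products and solving the resulting triangular system for the $\mathfrak{g}_i$ --- is routine, and the retract argument for general $W$ is purely formal.
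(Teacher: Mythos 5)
Your proposal is correct, and it passes through the same central object as the paper --- the Lie superalgebra $\mathfrak{g}$ with $U(\mathfrak{g})=\Com^{\TKK}(L(2))^{!}$, whose graded dual gives the homology --- but it computes the graded pieces $\mathfrak{g}_1,\dots,\mathfrak{g}_4$ by a different engine. The paper writes down the single defining relation $\{e^*,f^*\}+\{h^*,h^*\}=0$, checks that it is a Gr\"obner--Shirshov basis (its leading term $e^*f^*$ has no self-overlaps), and enumerates the normal super-Lyndon--Shirshov words of length at most $4$, reading off the $\mathfrak{sl}_2$-weights; you instead invert the equivariant Hilbert series $[L(0)]-[L(2)]t+[L(0)]t^2$ using Koszulness (Corollary~\ref{cor:comKoszul}) and peel off the $\mathfrak{g}_i$ via the super Poincar\'e--Birkhoff--Witt decomposition $\Lambda(\mathfrak{g}_{\mathrm{odd}})\otimes S(\mathfrak{g}_{\mathrm{even}})$. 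Your character computations ($[A^!_2]=[L(4)]+[L(2)]$, $[A^!_3]=[L(6)]+2[L(4)]+[L(2)]+[L(0)]$, $[A^!_4]=[L(8)]+3[L(6)]+3[L(4)]+3[L(2)]+[L(0)]$, hence $\mathfrak{g}_2=\mathfrak{g}_3=L(4)$ and $\mathfrak{g}_4=L(6)\oplus L(2)$) check out and agree with the weights of the paper's $3+5+5+10$ normal words; your method is in fact the one the paper itself invokes (by computer) in the Remark that follows for general $L(2)\otimes B$, and it trades the combinatorics of Lyndon--Shirshov words for a dependence on the Koszul property and careful parity bookkeeping, which you correctly flag as the delicate point. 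For the final assertion, your retract argument via $L(2)\hookrightarrow W\twoheadrightarrow L(2)$ and functoriality of $\Com^{\TKK}$, of homology, and of the exact truncation $(-)^{\TKK}$ is essentially the paper's observation that the bar complex of $\Com^{\TKK}(L(2))$ sits as a direct summand inside that of $\Com^{\TKK}(W)$, phrased slightly more formally; both are valid.
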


\begin{proof}
A direct calculation shows that the relations of the quadratic dual associative algebra of $\Com^{\TKK}(U)$ are
\begin{gather*}
\{h_p^*,f_q^*\}-\{h^*_q,f^*_p\}=0, p<q\in \{1,\ldots,\dim(B)\},\\
\{h^*_p,e^*_q\}-\{h^*_q,e^*_p\}=0, p<q\in \{1,\ldots,\dim(B)\},\\
\{e^*_p,f^*_q\}-\{e^*_q,f^*_p\}=0, p<q\in \{1,\ldots,\dim(B)\},\\
\{e^*_p,f^*_q\}+\{f^*_p,e^*_q\}+2\{h^*_p,h^*_q\}=0, p\le q\in \{1,\ldots,\dim(B)\},
\end{gather*}
where the curly brackets denote the anticommutator $\{x,y\}=xy+yx$. For $U=L_2$, this means that our algebra is generated by three elements $e^*,h^*,f^*$ subject to the only relation
 \[
\{e^*,f^*\}+\{h^*,h^*\}=0.
 \]
If we choose the order of generators for which $e^*$ is the largest one, the element $e^*f^*$ is the leading term of this relation, and, since this monomial does not form any overlaps with itself, our relation is a Gr\"obner basis. A direct inspection shows that the super-Lyndon--Shirshov words of length at most $4$ that are not divisible by $e^*f^*$ are as follows:
\begin{itemize}
\item $e^*$, $h^*$, $f^*$ of length $1$,
\item $(e^*)^2$, $e^*h^*$, $(h^*)^2$, $h^*f^*$, $(f^*)^2$ of length $2$, 
\item $(e^*)^2h^*$, $e^*(h^*)^2$, $e^*h^*f^*$, $(h^*)^2f^*$, $h^*(f^*)^2$ of length $3$,
\item $(e^*)^3h^*$, $(e^*)^2(h^*)^2$, $(e^*)^2h^*f^*$, $e^*(h^*)^3$, $e^*(h^*)^2f^*$, $e^*h^*f^*h^*$, $e^*h^*(f^*)^2$, $(h^*)^3f^*$, $(h^*)^2(f^*)^2$, $h^*(f^*)^3$ of length $4$.
\end{itemize}
The words of length $2$ are immediate to list. For lengths $3$ and $4$, it is sufficiently easy, and requires only very basic observations; for instance, if a super-Lyndon--Shirshov word starts with $h^*$, it cannot contain $e^*$, for otherwise it cannot be the largest among its cyclic shifts.  
Computing the weights of these words, we find 
\begin{gather*}
H_1(\Com^{\TKK}(U))\cong L(2),\\ 
H_2(\Com^{\TKK}(U))\cong L(4),\quad
 H_3(\Com^{\TKK}(U))\cong L(4), \\
 H_4(\Com^{\TKK}(U))\cong L(6)\oplus L(2),
\end{gather*} 
and the first statement follows. 

Suppose now that $W$ is an object of $\TKK$ that contains at least one copy of $L(2)$. Let $A$ be a subalgebra of $\Com^{\TKK}(W)$ generated by that copy; clearly, $A\cong \Com^{\TKK}(U)$, and the bar complex of $\Com^{\TKK}(W)$ contains the bar complex of $\Com^{\TKK}(U)$ as a direct summand, so we have $H_4(\Com^{\TKK}(U))^{\TKK}\ne 0$, as required. 
\end{proof}

\begin{remark}
Note that since our algebras are Koszul, we can easily determine homology from its $\mathfrak{sl}_2$-character, which in turn can be determined from the general properties of Koszul algebras. We did the corresponding calculations using \texttt{sage} \cite{sagemath} and obtained the  following $GL(B)$-module isomorphisms:
\begin{gather*}
H_1(\Com^{\TKK}(L(2)\otimes B))^{\TKK}\cong L(2)\otimes B,\\ 
H_2(\Com^{\TKK}(L(2)\otimes B))^{\TKK}\cong 0, H_3(\Com^{\TKK}(L(2)\otimes B))^{\TKK}\cong 0, \\
H_4(\Com^{\TKK}(L(2)\otimes B))^{\TKK}\cong L(2)\otimes S^4(B),\\
H_5(\Com^{\TKK}(L(2)\otimes B))^{\TKK}\cong L(2)\otimes (S^5(B)\oplus S^{4,1}(B)\oplus S^{3,2}(B))\oplus L(0)\otimes S^{4,1}(B).
\end{gather*} 
Here, for a partition $\lambda\vdash n$, we denote by $S^\lambda(B)$ the corresponding Schur functor \cite[Chapter 1]{MR3443860}, that is the $GL(B)$-module on the multiplicity of the irreducible $S_n$-module corresponding to $\lambda$ in the tensor product $B^{\otimes n}$. 

Moreover, $H_6(\Com^{\TKK}(L(2)\otimes B))^{\TKK}$ contains the trivial $\mathfrak{sl}_2$-module with the multiplicity $S^6(B)\oplus S^{5,1}(B)\oplus S^{4,2}(B)\oplus S^{4,2}(B)\oplus S^{3,2,1}(B)$, so the trivial $\mathfrak{sl}_2$-module also appears in the higher homology, as long as our object contains at least one copy of $L(2)$. 
\end{remark}

\section{Free associative algebras}

In this section, we shall describe free associative algebras in the category $\TKK$, and show that the main conjecture holds for them. 

We shall start with the following analogue of Proposition \ref{prop:quadrelCom} which lists all relations of the free algebra. 

\begin{proposition}\label{prop:quadrelAss}
Suppose that $U=L(0)\otimes A\oplus L(2)\otimes B$. The algebra $\Ass^{\TKK}(U)$ is the quotient of $\Ass(U)$ by the relations given by the $\mathfrak{sl}_2$-submodule in $U^{\otimes 2}$ generated by $(e\otimes B)\otimes T(L(0)\otimes A)\otimes (e\otimes B)$, where $e$ is the highest weight vector of $L(2)$. 
\end{proposition}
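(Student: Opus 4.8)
The plan is to mimic the argument for Proposition~\ref{prop:quadrelCom}, but to pay attention to the fact that the free associative algebra is not commutative, so that the combinatorics of which weight-$4$ (or higher) components survive is more delicate. First I would fix the basis $\{x_i\}\sqcup\{e_j,f_j,h_j\}$ of $U$ compatible with the $\mathfrak{sl}_2$-weights as in Section~2. A monomial in these generators has weight equal to the sum of the weights of its letters ($0$ for each $x_i$, and $+2,0,-2$ for $e_j,h_j,f_j$ respectively), so the highest weight attained by a word is $2k$, where $k$ is the number of $e$-type letters it contains. Since an object of $\TKK$ has no weights exceeding $2$, every word containing two or more $e$-type letters must be killed, and these are exactly the words lying in $(e\otimes B)\otimes T(L(0)\otimes A\oplus L(2)\otimes B)\otimes(e\otimes B)\otimes T(U)$, i.e.\ words with at least two $e$-letters. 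The first step is therefore to observe that the $\mathfrak{sl}_2$-submodule $M$ of $U^{\otimes 2}$ generated by $(e\otimes B)\otimes T(L(0)\otimes A)\otimes(e\otimes B)$ — wait: as written this is not inside $U^{\otimes 2}$ unless $T(L(0)\otimes A)$ is read in degree $0$, so the intended statement is that $M\subseteq U^{\otimes 2}$ is generated by $(e\otimes B)\otimes(e\otimes B)$; I would state it that way and note that the ideal it generates contains all of $(e\otimes B)\otimes T(U)\otimes(e\otimes B)$ since multiplying on either side by arbitrary words and resymmetrising under $\mathfrak{sl}_2$ produces every word with $\ge 2$ $e$-letters.

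Next I would run the two-sided bound argument. Let $I$ be the ideal generated by $M$. On one hand, $\Ass^{\TKK}(U)$ is by Proposition~(the one on free algebras as quotients) the quotient of $\Ass(U)$ by the ideal generated by all wrong isotypic components, and every wrong component occurs only in weight $\ge 3$, hence (being a highest-weight submodule) forces the vanishing of some weight-$\ge 3$ vector, which in turn forces a word with $\ge 2$ $e$-letters; so the ideal of $\Ass^{\TKK}(U)$ contains $I$, giving a surjection $\Ass(U)/I\twoheadrightarrow\Ass^{\TKK}(U)$. On the other hand, I claim $\Ass(U)/I$ already lies in $\TKK$: it is spanned by the cosets of words containing \emph{at most one} $e$-letter, and such words have weights in $\{-2k,\dots,2\}$ for suitable $k$; I must check that the span of words of a given weight, modulo $I$, carries no irreducible $\mathfrak{sl}_2$-constituent other than $L(0)$ and $L(2)$. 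The clean way to see this is: $\mathfrak{sl}_2$ acts on $\Ass(U)$ by derivations, the highest weight in $\Ass(U)/I$ is $2$, and the operator $e\in\mathfrak{sl}_2$ maps the weight-$0$ part into the weight-$2$ part and the weight-$2$ part to $0$ modulo $I$ (applying $e$ to a word with one $e$-letter either raises an $h$ to an $e$, producing a two-$e$ word, or raises an $f$ to an $h$; a short check shows the resulting decomposition only sees $L(0)$'s and $L(2)$'s). Then $\Ass(U)/I\in\TKK$ is a quotient of $\Ass(U)$ and hence receives the universal map from $\Ass^{\TKK}(U)$, and the two surjections compose to the identity, proving the isomorphism.

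The step I expect to be the real obstacle is the second bound: showing that $\Ass(U)/I$ has \emph{no} residual bad isotypic components beyond the ``at most one $e$'' truncation, equivalently that quotienting by the two-$e$ words already produces an object of $\TKK$. Killing words with $\ge 2$ $e$-letters kills all weights $\ge 4$ outright, but within a fixed weight $\le 2$ a module can still contain a hidden $L(0)$ coming from an $L(2k)$ whose highest-weight line was removed — one has to verify that no such thing survives. I would handle this by an explicit $\mathfrak{sl}_2$-representation-theoretic computation on the monomial basis of $\Ass(U)/I$ (words with at most one $e$-letter), showing that the $e$-raising operator has the expected rank in each weight so that the only constituents are $L(0)$ and $L(2)$; concretely, pairing a weight-$0$ word $w$ with the weight-$2$ word obtained by raising its unique $h$-letter (or, if $w$ has no $e$- or $h$-letter, noting $e\cdot w$ lands among two-$e$ words and is thus $0$) exhibits enough of the $\mathfrak{sl}_2$-action to conclude. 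This is the same phenomenon that makes Proposition~\ref{prop:descriptionCom} work, and I expect the associative case to be only mildly more involved because non-commutativity merely enlarges the monomial basis without changing the weight bookkeeping.
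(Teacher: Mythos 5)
Your overall two-sided-bound strategy is the same as the paper's, but the argument breaks at the very first step, where you ``repair'' the statement in the wrong direction. The typo in the proposition is the ``$U^{\otimes 2}$'', not the ``$T(L(0)\otimes A)$'': the relations are meant to be the $\mathfrak{sl}_2$-submodule of $T(U)$ generated by \emph{all} elements $e_i x_{k_1}\cdots x_{k_s} e_j$ with $s\ge 0$, and the higher-degree ones are genuinely needed. Your claim that the two-sided ideal generated by the quadratic part $(e\otimes B)\otimes(e\otimes B)$ already contains every word with two or more $e$-letters is false. That ideal is spanned by elements $a\cdot m\cdot b$ with $m$ in the $\mathfrak{sl}_2$-submodule generated by $(e\otimes B)^{\otimes 2}$, so every monomial occurring in any of its elements has two \emph{adjacent} letters from $L(2)\otimes B$; multiplying a quadratic relation on the left or right never inserts anything between its two letters, and acting by $\mathfrak{sl}_2$ does not help because the submodule is already $\mathfrak{sl}_2$-stable. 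In particular the weight-$4$ word $e_i x_k e_j$ does not lie in the quadratic ideal, so $\Ass(U)$ modulo the quadratic relations still has nonzero elements of weight $4$ and is not an object of $\TKK$ whenever $A\ne 0$. (This is exactly the point of the Remark following Proposition~\ref{prop:descriptionAss}: $\Ass^{\TKK}(U)$ is not even quadratic in general; contrast with the commutative case of Proposition~\ref{prop:quadrelCom}, where commutativity lets you slide the $x$'s out of the way.) The fix is to impose the relations $e_i\mathbf{x}e_j$ for every word $\mathbf{x}$ in the generators $x_k$, together with their $\mathfrak{sl}_2$-orbits; for an order with $e_*>h_*>f_*>x_*$ the leading terms are then $e_i\mathbf{x}e_j$, $f_i\mathbf{x}f_j$, $e_i\mathbf{x}h_j$, $h_i\mathbf{x}f_j$, $e_i\mathbf{x}f_j$, which force every normal word to contain at most one $e$-letter \emph{and} at most one $f$-letter, hence to have weight in $\{-2,0,2\}$, and the rest of your argument goes through.

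A secondary point: your concern about ``hidden'' bad constituents inside the weights $\le 2$ is unfounded in characteristic zero, so the rank computation for the raising operator you propose is unnecessary. A quotient of the completely reducible module $\Ass(U)$ is completely reducible, and a completely reducible $\mathfrak{sl}_2$-module with only even weights and highest weight at most $2$ is a direct sum of copies of $L(0)$ and $L(2)$; an ``$L(2k)$ with its highest-weight line removed'' is not a quotient of the irreducible module $L(2k)$. Once the spanning set of normal words is correct, the bound on the top weight finishes the proof immediately, which is precisely how the paper argues.
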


\begin{proof}
For a basis $\{x_i\colon i=1,\ldots,\dim(A)\}\sqcup \{e_j, f_j, h_j \colon j=1,\ldots,\dim(B)\}$ of the vector space $U=L(0)\otimes A\oplus L(2)\otimes B$, we see that our relations must contain all monomials $e_ix_{k_1}\cdots x_{k_s}e_j$ (for there are no elements of weight $4$ in objects of $\TKK$), which are precisely the basis elements of the vector space $(e\otimes B)\otimes T(L(0)\otimes A)\otimes (e\otimes B)$. 

It is easy to describe the $\mathfrak{sl}_2$-submodule generated by each such element: it consists of the elements 
\begin{gather*}
e_i \mathbf{x} e_j,\ \  f_i \mathbf{x} f_j,\\
h_i \mathbf{x} e_j+e_i \mathbf{x} h_j, \ \  h_i \mathbf{x} f_j+f_i \mathbf{x} h_j,\\
f_i \mathbf{x} e_j+e_i \mathbf{x} f_j - h_i \mathbf{x} h_j, 
\end{gather*}
for $i,j\in \{1,\ldots,\dim(B)\}$, where we denote for brevity $\mathbf{x}=x_{k_1}\cdots x_{k_s}$. Let us choose some order of generators for which all generators $e_j$ are greater than all generators $h_j$, which in turn are greater than all generators $f_j$, which are greater than all generators $x_i$, and consider the graded lexicographic order of monomials. Then the leading terms of those relations are
 \[
e_i \mathbf{x} e_j, f_i \mathbf{x} f_j, e_i \mathbf{x} h_j, h_i \mathbf{x} f_j, e_i \mathbf{x} f_j
 \]
for $i,j\in \{1,\ldots,\dim(B)\}$. In principle, we do not know whether our relations form a Gr\"obner basis, but the monomials that are normal with respect to them form a spanning set in the quotient algebra, and so we can look at them to ensure that the quotient by our relations is an object of $\TKK$. Indeed, such a normal monomial can contain at most one element $e_j$ (since no element from $L(2)\otimes B$ can follow $e_j$, even separated by some factors $x_i$), and at most one element $f_j$ (since no element from $L(2)\otimes B$ can precede $f_j$, even separated by some factors $x_i$).  
Consequently, the quotient by the relations given by $\mathfrak{sl}_2$-submodule generated by these elements already has no elements of weight $4$ or more, and hence is an object of $\TKK$, it is the free associative algebra. 
\end{proof}

We shall now use the calculation from the previous proof to give an explicit description of the algebra $\Ass^{\TKK}(L(2)\otimes B)$.

\begin{proposition}\label{prop:descriptionAss}
We have 
 \[
\Ass^{\TKK}(L(2)\otimes B)=\k\oplus L(2)\otimes B\oplus\bigoplus_{p\ge 2}\Mat_2(B^{\otimes p})
 \]
where the product is given by simultaneously performing the matrix product and the concatenation of tensors.  
\end{proposition}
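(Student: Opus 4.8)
The plan is to follow the same ``upper bound meets lower bound'' strategy that was used for $\Com^{\TKK}(L(2)\otimes B)$ in Proposition~\ref{prop:descriptionAss}'s commutative analogue. First I would recall from the proof of Proposition~\ref{prop:quadrelAss} the normal monomials with respect to the leading terms of the quadratic(-ish) relations: a normal word contains at most one letter from $\{e_1,\dots,e_{\dim B}\}$, which (if present) must be its first letter among the $L(2)$-generators, and at most one letter from $\{f_1,\dots,f_{\dim B}\}$, which (if present) must be its last letter among the $L(2)$-generators; between, before, and after these there may be arbitrarily many $h_j$'s. Thus for $U=L(2)\otimes B$ (no $x_i$'s present) a normal word of length $p\ge 2$ has one of the four shapes $e_{i_1}h_{i_2}\cdots h_{i_{p-1}}f_{i_p}$, $e_{i_1}h_{i_2}\cdots h_{i_p}$, $h_{i_1}\cdots h_{i_{p-1}}f_{i_p}$, $h_{i_1}\cdots h_{i_p}$, giving exactly $4(\dim B)^p = \dim\Mat_2(B^{\otimes p})$ normal words of length $p$; together with the unit and the $3\dim B$ words of length $1$ this gives the upper bound on dimensions of homogeneous components that matches the claimed answer.

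Next I would produce the matching lower bound by exhibiting an associative algebra structure on $\k\oplus L(2)\otimes B\oplus\bigoplus_{p\ge 2}\Mat_2(B^{\otimes p})$ that is generated by $L(2)\otimes B$ and lies in $\TKK$. The natural candidate is suggested by the classical fact that $\mathfrak{sl}_2\cong\Mat_2$ as a subspace of $2\times 2$ matrices: send $e\mapsto\left(\begin{smallmatrix}0&1\\0&0\end{smallmatrix}\right)$, $f\mapsto\left(\begin{smallmatrix}0&0\\1&0\end{smallmatrix}\right)$, $h\mapsto\left(\begin{smallmatrix}1&0\\0&-1\end{smallmatrix}\right)$, so that a generator $u\otimes b$ acts on degree-$p$ elements by the matrix of $u$ tensored with $b$ on the left (or right) of the tensor word; the product of two elements of $\bigoplus_{p\ge 2}\Mat_2(B^{\otimes p})$ is the matrix product times tensor concatenation, and the product of a degree-$1$ generator with a higher-degree element uses the same rule once we embed $L(2)\otimes B\hookrightarrow\Mat_2(B)$ via the above. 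One must check that this really gives a well-defined associative $\mathfrak{sl}_2$-equivariant product landing in $\TKK$ --- equivariance is automatic since the $\mathfrak{sl}_2$-action on $\Mat_2$ is by the adjoint (commutator) action of the image of $\mathfrak{sl}_2$, which is exactly the decomposition $\Mat_2 = L(0)\oplus L(2)$, and associativity is the associativity of matrix multiplication and of tensor concatenation. Since this algebra is generated in degree $1$, it receives a surjection from $\Ass^{\TKK}(L(2)\otimes B)$, giving the lower bound; combined with the upper bound, the surjection is an isomorphism.

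The one subtlety I anticipate is the low-degree bookkeeping: the statement writes the degree-$1$ part as $L(2)\otimes B$ rather than $\Mat_2(B)$, i.e.\ only the trace-free part of $\Mat_2(B)$ survives in degree $1$, whereas in degrees $p\ge 2$ the full $\Mat_2(B^{\otimes p})$ appears. I would explain this as follows: the generators themselves are $\mathfrak{sl}_2$-equivariantly the image of $L(2)\otimes B$, so degree $1$ is $L(2)\otimes B$ by fiat, but already the product of two generators $(u_1\otimes b_1)(u_2\otimes b_2)$ fills out all of $\Mat_2(B^{\otimes 2})$ because the associative (non-symmetrized) products $u_1u_2$ in $\Mat_2$ span all four matrix units as $u_1,u_2$ range over $e,f,h$ --- this is precisely the input that makes the associative case behave differently from the commutative one, where only the symmetric combination $K(u_1,u_2)+[u_1,u_2]$ survives. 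Once this is observed, one should double-check that the claimed dimension count $4(\dim B)^p$ for $p\ge 2$ indeed equals the number of normal words computed above, which it does, so there is no room for a hidden relation and the bound is sharp. This is the step where I would be most careful, but I expect it to be genuinely routine rather than hard; the conceptual content is entirely in the identification $\Mat_2 = L(0)\oplus L(2)$ as $\mathfrak{sl}_2$-modules and algebras.
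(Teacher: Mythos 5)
Your proposal is correct and follows essentially the same route as the paper: an upper bound from counting monomials normal with respect to the leading terms of the relations of Proposition~\ref{prop:quadrelAss}, matched by a lower bound from the explicit algebra $\k\oplus L(2)\otimes B\oplus\bigoplus_{p\ge 2}\Mat_2(B^{\otimes p})$ with matrix product and tensor concatenation, generated by $L(2)\otimes B$ via the standard identification $\Mat_2\cong L(0)\oplus L(2)$. One small bookkeeping slip: with the order actually used in the proof of Proposition~\ref{prop:quadrelAss} ($e_j>h_j>f_j$), the leading terms are $e_ie_j,\,f_if_j,\,e_ih_j,\,h_if_j,\,e_if_j$, so nothing may \emph{follow} an $e$ and nothing may \emph{precede} an $f$; the normal words therefore have the shape $f_{i_1}h_{i_2}\cdots h_{i_{p-1}}e_{i_p}$ (optional $f$ first, optional $e$ last), the transpose of what you wrote --- this does not affect the count $4(\dim B)^p$ or any subsequent step.
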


\begin{proof}
Note that the proof of Proposition \ref{prop:quadrelAss} shows that for any order of generators for which all generators $e_j$ are greater than all generators $h_j$, which in turn are greater than all generators $f_j$, the defining relations of the algebra $\Ass^{\TKK}(L(2)\otimes B)$ have the following leading terms:
 \[
e_ie_j,\, f_if_j,\, e_ih_j,\, h_if_j,\, e_if_j.
 \]
Thus, for $p\ge 2$, the monomials of degree $p$ that are normal forms with respect to these relations are of the following types:
 \[
h_{i_1}h_{i_2}\cdots h_{i_p},\quad
h_{i_1}h_{i_2}\cdots h_{i_{p-1}}e_{i_p},\quad
f_{i_1}h_{i_2}\cdots h_{i_p},\quad
f_{i_1}h_{i_2}\cdots h_{i_{p-1}}e_{i_p}
 \]
(there can be no elements after $e_j$ and no elements before $f_j$). This gives an estimate from the above on dimensions of homogeneous components of the algebra $\Ass^{\TKK}(L(2)\otimes B)$: these normal monomials form a spanning set. At the same time, the vector space 
 \[
\k\oplus L(2)\otimes B\oplus\bigoplus_{p\ge 2}\Mat_2(B^{\otimes p})
 \]
with the algebra structure described above is an algebra in the category $\TKK$ (since we have an isomorphism of $\mathfrak{sl}_2$-modules $\Mat_2\cong L(0)\oplus L(2)$) and is easily seen to be generated by $L(2)\otimes B$, so we obtain a lower bound that coincides with the upper one, and therefore our algebra is free, and our relations form a Gr\"obner basis. 
\end{proof}

\begin{corollary}
The algebra $\Ass^{\TKK}(L(2)\otimes B)$ is Koszul.
\end{corollary}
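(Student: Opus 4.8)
The plan is to follow exactly the pattern of Corollary~\ref{cor:comKoszul}, replacing the input from the commutative setting by the computation carried out in Proposition~\ref{prop:descriptionAss}. The key point is that the relations of $\Ass^{\TKK}(L(2)\otimes B)$ are \emph{quadratic}: specializing the description of the defining relations from the proof of Proposition~\ref{prop:quadrelAss} to the case in which the generating object has no weight-zero summand, the word $\mathbf{x}=x_{k_1}\cdots x_{k_s}$ is empty, so the relations are simply
\[
e_ie_j,\quad f_if_j,\quad h_ie_j+e_ih_j,\quad h_if_j+f_ih_j,\quad f_ie_j+e_if_j-h_ih_j
\]
for $i,j\in\{1,\ldots,\dim(B)\}$, all of which lie in $(L(2)\otimes B)^{\otimes 2}$.

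Next I would recall what Proposition~\ref{prop:descriptionAss} actually establishes: for any admissible order in which every $e_j$ exceeds every $h_j$, which in turn exceeds every $f_j$, these quadratic relations form a Gr\"obner basis, the leading monomials being $e_ie_j$, $f_if_j$, $e_ih_j$, $h_if_j$, $e_if_j$. This is genuinely proved there and not merely asserted: the explicit description of the algebra shows that the spanning set of normal monomials is in fact a basis, and by the Gr\"obner basis criterion recalled in the preliminaries this is equivalent to the relations being a Gr\"obner basis. Hence $\Ass^{\TKK}(L(2)\otimes B)$ is a quadratic algebra admitting a quadratic Gr\"obner basis.

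Finally, I would invoke the standard fact that an associative algebra presented by a quadratic Gr\"obner basis is Koszul; in the terminology of \cite[Chapter~4, Sec.~8]{MR2177131} such an algebra is PBW, and PBW algebras are Koszul. Combining this with the two previous steps finishes the proof. I do not anticipate any real obstacle here: the only point requiring a moment's care is that the relations are honestly homogeneous of degree two, which holds precisely because the generating object $L(2)\otimes B$ has no trivial summand; every other ingredient has already been established in Proposition~\ref{prop:descriptionAss}.
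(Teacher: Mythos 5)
Your argument is correct and coincides with the paper's own proof: Proposition~\ref{prop:descriptionAss} shows the defining relations are quadratic and form a Gr\"obner basis, and the standard fact that a quadratic Gr\"obner basis (PBW) presentation implies Koszulness finishes the proof. The extra care you take in noting that the relations are genuinely quadratic because $L(2)\otimes B$ has no trivial summand is a welcome clarification but does not change the route.
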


\begin{proof}
This follows from the fact that this algebra has a quadratic Gr\"obner basis, since this implies the Koszul property \cite[Chapter 4, Theorem~3.1]{MR2177131}.
\end{proof}

\begin{remark}
If one considers the general case of the free associative algebra generated by $U=L(0)\otimes A\oplus L(2)\otimes B$, that algebra, as we saw in the proof of Proposition \ref{prop:quadrelAss}, is not at all the coproduct of two free algebras, and is not Koszul (since it is not even quadratic). It is however possible to write down a description of this algebra analogous to that of Proposition \ref{prop:descriptionAss}. That description, which we shall not really use in this article, is
 \[
\Ass^{\TKK}(U)=T(A)\oplus L(2)\otimes (T(A)\otimes B\otimes T(A))\oplus\bigoplus_{p\ge 2}\Mat_2\left((T(A)\otimes B\otimes T(A))^{\boxtimes p}\right)
 \]
where $\boxtimes = \otimes_{T(A)}$, and the product is given by simultaneously performing the matrix product and the concatenation of tensors.
\end{remark}

We shall now prove that the main conjecture holds for associative algebras in the category $\TKK$.

\begin{theorem}\label{th:vanishAss}
For every object $U$ of $\TKK$, the truncated homology $H_\bullet(\Ass^{\TKK}(U))^{\TKK}$ vanishes in each degree greater than one.
\end{theorem}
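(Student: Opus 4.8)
The plan is to exploit the fact that, although $\Lambda:=\Ass^{\TKK}(U)$ is not Koszul over $\k$ when $U=L(0)\otimes A\oplus L(2)\otimes B$ has $A\ne 0$ (it is not even quadratic), it \emph{is} Koszul over the free associative algebra $C:=\Ass^{\TKK}(L(0)\otimes A)=T(A)$. Throughout, $H_\bullet(\Lambda)$ denotes $\Tor^\Lambda_\bullet(\k,\k)$, and $C$ is identified with the subalgebra $\Lambda_{\langle 0\rangle}$ of elements of ``$B$-weight'' zero, for the grading $\Lambda=\bigoplus_{p\geq 0}\Lambda_{\langle p\rangle}$ made visible by the explicit formula for $\Ass^{\TKK}(U)$ in the remark after Proposition~\ref{prop:descriptionAss}. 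In this grading $\Lambda_{\langle 1\rangle}=L(2)\otimes M$ with $M=T(A)\otimes B\otimes T(A)$ the free $C$-bimodule on $B$, the algebra $\Lambda$ is generated over $C$ by $\Lambda_{\langle 1\rangle}$, and by Proposition~\ref{prop:quadrelAss} its relations lie in $B$-weight $2$ and span the ``wrong'' isotypic component $R_C\cong L(4)\otimes M^{\otimes_C 2}$ of $\Lambda_{\langle 1\rangle}\otimes_C\Lambda_{\langle 1\rangle}\cong L(2)^{\otimes 2}\otimes M^{\otimes_C 2}$. The Gr\"obner basis recorded in the proof of Proposition~\ref{prop:descriptionAss} (and the remark that follows it) is, after the obvious reinterpretation, a \emph{quadratic} Gr\"obner basis of this presentation of $\Lambda$ over $C$; exactly as in the absolute case this forces the relative Koszul complex
\[
\cdots\longrightarrow W_n\otimes_C\Lambda\longrightarrow\cdots\longrightarrow W_1\otimes_C\Lambda\longrightarrow\Lambda\longrightarrow C\longrightarrow 0,\qquad W_n:=\bigcap_{i=0}^{n-2}\Lambda_{\langle 1\rangle}^{\otimes_C i}\otimes_C R_C\otimes_C\Lambda_{\langle 1\rangle}^{\otimes_C(n-2-i)},
\]
to be exact. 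Since $L(2)$ carries no $C$-action, $W_n\cong\bigl(\bigcap_i L(2)^{\otimes i}\otimes L(4)\otimes L(2)^{\otimes(n-2-i)}\bigr)\otimes M^{\otimes_C n}$, and the $\mathfrak{sl}_2$-factor is the Cartan component $L(2n)\subset L(2)^{\otimes n}$ --- equivalently, for $A=0$ and $B=\k$ it is $\Tor_n(\Ass^{\TKK}(L(2)))$, which equals $L(2n)$ by the Koszulity of $\Ass^{\TKK}(L(2))$ together with a one-line Hilbert-series count. As $M^{\otimes_C n}$ is free as a one-sided $C$-module, each $W_n\otimes_C\Lambda$ is a free $\Lambda$-module, so the displayed complex is a genuine free resolution of $C$ over $\Lambda$.

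Applying $(-)\otimes_\Lambda\k$ to this resolution kills every differential, since each of them factors through multiplication by elements of the augmentation ideal $\bar\Lambda$; hence
\[
\Tor^\Lambda_n(C,\k)\cong W_n\otimes_C\k\cong L(2n)\otimes\bigl(M^{\otimes_C n}\otimes_C\k\bigr)\quad(n\geq 1),\qquad \Tor^\Lambda_0(C,\k)\cong\k,
\]
where $M^{\otimes_C n}\otimes_C\k$ is free as a \emph{left} $C$-module and carries the trivial $\mathfrak{sl}_2$-action. I then invoke the Cartan--Eilenberg change-of-rings spectral sequence for the surjection $\Lambda\twoheadrightarrow C$ (whose kernel is the ideal $\Lambda_{\langle\geq 1\rangle}$ generated by the weight-$2$ generators),
\[
E^2_{p,q}=\Tor^C_p\bigl(\k,\Tor^\Lambda_q(C,\k)\bigr)\ \Longrightarrow\ \Tor^\Lambda_{p+q}(\k,\k).
\]
Because $C=T(A)$ is a free associative algebra we have $\Tor^C_p(\k,-)=0$ for $p\geq 2$, and $\Tor^C_p(\k,F)=0$ for $p\geq 1$ whenever $F$ is a free $C$-module. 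Thus the only possibly nonzero entries of $E^2$ are the column $p=0$ and the single extra entry $E^2_{1,0}=\Tor^C_1(\k,\k)=A$, and for $q\geq 1$ the group $E^2_{0,q}=\k\otimes_C\Tor^\Lambda_q(C,\k)$ is a direct sum of copies of $L(2q)$, tensored with a trivial $\mathfrak{sl}_2$-module. For degree reasons the spectral sequence degenerates at $E^2$, and therefore, for every $n\geq 2$,
\[
H_n\bigl(\Ass^{\TKK}(U)\bigr)=\Tor^\Lambda_n(\k,\k)\cong E^2_{0,n}\cong L(2n)\otimes(\text{a trivial }\mathfrak{sl}_2\text{-module}).
\]
Since $L(2n)$ with $2n\geq 4$ contains neither $L(0)$ nor $L(2)$, the truncation $H_n(\Ass^{\TKK}(U))^{\TKK}$ vanishes for all $n\geq 2$, which is the assertion. (The same bookkeeping recovers $H_0\cong\k$ and $H_1\cong A\oplus L(2)\otimes B=U$ in degrees $0$ and $1$.)

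The step I expect to be the main obstacle is establishing the relative Koszulness of $\Lambda$ over $C$, i.e.\ the exactness of the relative Koszul complex above, together with carrying the attendant left/right $C$-module bookkeeping through cleanly. I would deduce it from the quadratic-over-$C$ Gr\"obner basis already available, using the relative analogue of the principle ``a quadratic Gr\"obner basis implies the Koszul property''; if a convenient reference for that relative statement is lacking, one can instead argue by hand, observing that in each fixed $B$-weight $W_\bullet\otimes_C\Lambda$ is a \emph{finite} complex of free $C$-modules whose exactness is checked against the normal-form basis of $\Lambda$ furnished by the Gr\"obner basis.
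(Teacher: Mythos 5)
Your argument is correct in outline but follows a genuinely different route from the paper's. The paper's proof uses the Anick resolution attached to the Gr\"obner basis of Proposition~\ref{prop:quadrelAss}: since the relations are quadratic (respectively, ``quadratic relative to $T(A)$''), the induced differential on the chain spaces vanishes, the $k$-chains can be listed explicitly as the words $e_{i_1}\cdots e_{i_s}f_{j_1}\cdots f_{j_t}$ and $e_{i_1}\cdots e_{i_s}h_pf_{j_1}\cdots f_{j_t}$ (with layers of $x$'s interspersed in the general case), and a direct weight count shows that $\k C_k$ is a multiple of $L(2k+2)$, whence $H_k$ is a multiple of $L(2k)$. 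You instead resolve $T(A)$ over $\Lambda$ by a relative Koszul complex and feed $\Tor^\Lambda_\bullet(T(A),\k)$ into the change-of-rings spectral sequence; your bookkeeping there is correct (freeness of $M^{\otimes_C q}\otimes_C\k$ over $T(A)$ kills everything outside the column $p=0$ and the corner $(1,0)$, and the degeneration at $E^2$ is forced by the shape of the page), and both computations land on the same answer, a multiple of $L(2n)$ in homological degree $n\ge 2$. What your approach buys is a structural separation of the contribution of $T(A)$ from that of the adjoint generators; what it costs is the step you flag yourself, namely the exactness of the relative Koszul complex over the non-semisimple base $T(A)$, for which no off-the-shelf reference exists. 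That gap does close along the lines of your last paragraph: every bimodule in sight has the form $(\mathfrak{sl}_2\text{-module})\otimes M^{\otimes_C p}$ with $M=T(A)\otimes B\otimes T(A)$ a free bimodule, and all differentials act by the identity on the $M$-factors, so the relative complex in each fixed $B$-weight $m$ is the weight-$m$ piece of the absolute Koszul complex of $\Ass^{\TKK}(L(2)\otimes B)$ with the passenger $B^{\otimes m}$ replaced by $M^{\otimes_C m}$; exactness then follows from the Koszulity of $\Ass^{\TKK}(L(2)\otimes B)$ established after Proposition~\ref{prop:descriptionAss}. Note that the paper's own treatment of the case $A\ne 0$ likewise rests on an ``insert layers of $x$'s, nothing changes'' assertion for the Anick chains, so neither route escapes this bookkeeping; the Anick route simply avoids having to set up any relative Koszul machinery.
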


\begin{proof}
For an augmented associative algebra $R$, the homology $H_\bullet(R)$ is the homology of the bar construction 
$\mathsf{B}_{\Ass}(R)$, which computes $\Tor_{\bullet}^{R}(\k,\k)$. That $\Tor$ functor can be computed using any free resolution 
$(F_\bullet,d)=(X_\bullet\otimes R,d)$ of the augmentation module $\k$; precisely, 
 \[
\Tor_{\bullet}^{R}(\k,\k)\cong H_{\bullet}(F_\bullet\otimes_R\k,d\otimes_R 1)\cong H_{\bullet}((X_\bullet\otimes R)\otimes_R\k,d\otimes_R 1)\cong H_{\bullet}(X_\bullet,\overline{d}),
 \]
where $\overline{d}$ is the part of $d$ that ``survives'' after tensoring with the augmentation module. 

We shall use the so called Anick resolution \cite{MR0846601,MR1360005} that exists for an algebra with a given Gr\"obner basis. Let us briefly recall its key features. Suppose $R=\k\langle X\mid G \rangle$ is an augmented associative algebra with generators $X$ and relations $R$. We shall assume that $G$ is a reduced Gr\"obner basis for a certain admissible order of monomials in generators $X$ (this means that the leading terms of $G$ form an antichain in the poset of monomials in $X$ with the order given by divisibility). Out of that datum, one can inductively define the notion of a (right) $k$-chain and a tail of a $k$-chain as follows.
\begin{itemize}
\item the $0$-chains are generators (elements of $X$); each of them coincides with its tail;
\item for $k>0$, a $k$-chain is a word $c$ in the alphabet $X$ which can be written as the concatenation $c't$, where $c'$ is a $(k-1)$-chain, and $t$ is some word, called the tail of $c$, such that
\begin{itemize}
\item if we denote by $t'$ the tail of $c'$, there exists a factorization $t'=m_1 m_2$ such that $m_2 t$ is the leading term of an element of $G$, and there are no other divisors of $t't$ that are leading terms of $G$,
\item no proper beginning of $c$ is a $k$-chain. 
\end{itemize}
\end{itemize}
It is established in \cite{MR0846601} that there exists a resolution of the (right) augmentation $R$-module $\k$ by the free right $R$-modules of the form
 \[
\ldots \to\k C_k\otimes R\xrightarrow{d_k}\k C_{k-1}\otimes R\to\ldots\to\k C_1\otimes R\xrightarrow{d_1} \k C_0\otimes R\xrightarrow{d_0} R\to0
 \]
The differentials $d_k$ of that resolution are constructed inductively together with the splittings 
$i_k\colon\operatorname{Ker} d_{k-1}\to \k C_k\otimes R$ satisfying $d_k i_k=\operatorname{Id}_{|\operatorname{Ker}d_{k-1}}$.

In our case, $R=\Ass^{\TKK}(U)$ for some object $U$ of $\TKK$. Let us first consider the particular case $U=L(2)\otimes B$, in which we established that the algebra has a quadratic Gr\"obner basis. This means that each $k$-chain is a word of length $k+1$, for the only way to add a tail to a $k$-chain by ``linking'' it with a quadratic relation is when that tail consists of just one letter. Therefore, for each $c\in C_k\otimes R$, we have $d_k(c)\in C_{k-1}\otimes \overline{R}$, where $\overline{R}$ is the augmentation ideal of $R$, and so $\overline{d}_k(c)=0$. Thus, 
 \[
H_k(\Ass^{\TKK}(L(2)\otimes B))\cong \k C_{k-1}.
 \]
Moreover, the leading terms of the Gr\"obner basis found in Proposition \ref{prop:descriptionAss} are
 \[
e_i e_j,  f_i f_j, e_i h_j,  h_i f_j, e_i f_j,
 \]
so each $k$-chain is of one the following forms:
\begin{gather*}
e_{i_1}e_{i_2}\cdots e_{i_s}f_{j_1}f_{j_2}\cdots f_{j_t}, s,t\ge 0, s+t=k+1\\
e_{i_1}e_{i_2}\cdots e_{i_s}h_p f_{j_1}f_{j_2}\cdots f_{j_t}, s,t\ge 0, s+t=k.
\end{gather*}
Every element of the first type has the $\mathfrak{sl}_2$-weight $2s-2(k+1-s)=4s-2-2k$, and every element of the second type has the $\mathfrak{sl}_2$-weight $2s-2(k-s)=4s-2k$. It is therefore clear that for the fixed sequence of indices in $\{1,\ldots,n\}^{k+1}$ these weights contain every even number between $2k+2$ and $-2k-2$ exactly once, and we have an $\mathfrak{sl}_2$-module isomorphism
 \[
\k C_k\cong L(2k+2)^{n^{k+1}}.
 \]
Thus, $H_\bullet(\Ass^{\TKK}(L(2)\otimes B))^{\TKK}=C_{\bullet-1}^{\TKK}$ 
vanishes in degrees greater than $1$. 

Let us now discuss the general case $R=\Ass^{\TKK}(L(0)\otimes A\oplus L(2)\otimes B)$. As established in Proposition \ref{prop:quadrelAss}, while the relations of this algebra are not quadratic, they are ``quadratic relative to the base algebra'' $T(A)$: they are the elements
\begin{gather*}
e_i \mathbf{x} e_j,\ \  f_i \mathbf{x} f_j,\\
h_i \mathbf{x} e_j+e_i \mathbf{x} h_j, \ \  h_i \mathbf{x} f_j+f_i \mathbf{x} h_j,\\
f_i \mathbf{x} e_j+e_i \mathbf{x} f_j - h_i \mathbf{x} h_j, 
\end{gather*}
where we denote for brevity $\mathbf{x}=x_{k_1}\cdots x_{k_s}$. Let us show that for any order of generators for which all generators $e_j$ are greater than all generators $h_j$, which in turn are greater than all generators $f_j$, which are greater than all generators $x_i$, these relations form a Gr\"obner basis for the graded lexicographic order of monomials. The fastest way to justify this is to say that every S-polynomial of two elements of this set will be reduced to zero in the exact same way as every S-polynomial of two elements of the set of relations for $\Ass^{\TKK}(L(2)\otimes B)$: the fact that now we have some ``layers'' $x_{k_1}\cdots x_{k_s}$ between the variables from the adjoint representation does not change anything in computing the reductions. In fact, the same argument applies to the Anick resolution: instead of each single chain, we shall have infinitely many chains with ``layers'' $x_{k_1}\cdots x_{k_s}$ between the variables from the adjoint representation. Computation of the $\mathfrak{sl}_2$-weights does not change, and we find that $H_k(\Ass^{\TKK}(L(0)\otimes A\oplus L(2)\otimes B))$ is a multiple of $L(2k)$, completing the proof.
\end{proof}

\section{Free Lie algebras}

In this section, we discuss the original case of the main conjecture: that for free Lie algebras. That conjecture remains out of reach for the time being, but we hope that recording various results and non-results should be helpful in attacking it. 

\subsection{Universal enveloping algebras: the Poincar\'e--Birkhoff--Witt non-theorem}

If the category $\TKK$ were symmetric monoidal, we would be able to compute Lie algebra homology using $\Tor$ over the universal enveloping algebra. Let us consider the simplest possible example which exhibits a surprising numerical coincidence that however does not present itself in more complicated situations. 

\begin{example}\label{ex:GL}
Consider the case of the Tits--Allison--Gao construction applied to $t\k[t]$, the free Jordan algebra on one generator. It is explained in \cite[Sec.~4.1]{MR4235202} that in this case one of the theorems of Garland and Lepowsky on Lie algebra homology \cite{MR0414645} applies, showing that $H_k(\Lie^{\TKK}(L(2)))\cong L(2k)$. Similarly, examining the proof of Theorem \ref{th:vanishAss}, we find
$H_k(\Ass^{\TKK}(L(2)))\cong L(2k)$.  
Thus, in this case computing the homology using the universal enveloping algebra ``works'', in the weakest possible sense of giving the correct answer. However, this phenomenon does not persist: already for the free Jordan algebra on one generator, the situation is much more complicated. To see that, recall that we have seen in the proof of Theorem~\ref{th:vanishAss} that we have a $\mathfrak{sl}_2$-module isomorphism
 \[
H_2(\Ass^{\TKK}(L(2)\oplus L(2)))\cong L(4)^4
 \]
and that $L(4)$ appears only in homological degree two. However, since a theorem of Shirshov \cite{MR668355} asserts that the free Jordan algebra on two generators is special, that is, coincides with the Jordan subalgebra of $\k\langle x,y\rangle$ generated by $x,y$ under the Jordan product defined by Formula \eqref{eq:SJordan}; thus, it is not hard to compute the Euler characteristic of the $\mathfrak{sl}_2$-action on the Chevalley--Eilenberg complex of the Lie algebra $\Lie^{\TKK}(L(2)\oplus L(2))$ and to see that $L(4)$ appears in the homology with multiplicity at least $10$. 
\end{example}

In the remainder of this section, we shall explain in what precise sense the universal enveloping algebras in $\TKK$ lose some information about their Lie algebras: it turns out that they do not generally contain the original Lie algebra as a subalgebra. We shall focus on universal enveloping algebras of free Lie algebras in $\TKK$, which, of course, are free associative algebras in $\TKK$ (the composition of two left adjoint functors is a left adjoint functor). Note that, as established by the first author and Tamaroff \cite{MR4300233}, under a certain functoriality assumption, a Poincar\'e--Birkhoff--Witt type theorem holds for every Lie algebra if and only if it holds for all free Lie algebras.

\begin{theorem}
The Lie subalgebra $\mathfrak{L}(B)$ of the algebra $\Ass^{\TKK}(L(2)\otimes B)$ generated by $L(2)\otimes B$ is isomorphic to $\TAG(\SJord(B))$,
the Tits--Allison--Gao construction of the special Jordan algebra generated by $B$.  
\end{theorem}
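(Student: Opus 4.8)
I would carry out the whole argument inside $\Mat_2(T(B))$, where $T(B)$ is the free associative algebra and $\Mat_2(V)\cong L(0)\oplus L(2)$ as $\mathfrak{sl}_2$-modules for the adjoint action of the standard copy $\mathfrak{sl}_2\subset\Mat_2(\k)$. By Proposition~\ref{prop:descriptionAss}, $R:=\Ass^{\TKK}(L(2)\otimes B)$ is the $\mathfrak{sl}_2$-stable subalgebra of $\Mat_2(T(B))$ equal to $\k\cdot(E_{11}+E_{22})$ in degree $0$, to the traceless matrices with entries in $B$ in degree $1$ (with $e_j,h_j,f_j\mapsto b_jE_{12},\ b_j(E_{11}-E_{22}),\ b_jE_{21}$), and to all of $\Mat_2(B^{\otimes p})$ in degrees $p\ge 2$; thus $\mathfrak{L}(B)$ is the Lie subalgebra of $\Mat_2(T(B))$ generated by the traceless degree-one matrices. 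A one-line check with the normalisation $K(e,f)=\tfrac12$ shows that for any associative $A$ the Tits functor $\calT$ applied to $\Mat_2(A)$ returns $A^{(+)}=(A,\tfrac12(ab+ba))$ realised on the $E_{12}$-entries: for $x=aE_{12}$, $y=bE_{12}$ one has $\tfrac12[x,f(y)]=\tfrac12(ab+ba)E_{12}$. In particular $\calT(R)=\overline{T}(B)^{(+)}$, where $\overline{T}(B)$ is the augmentation ideal.

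The first step is to identify $\calT(\mathfrak{L}(B))$. Since $\mathfrak{L}(B)$ is the image of the canonical Lie map $\Lie^{\TKK}(L(2)\otimes B)\to R$, since $R$ is the universal enveloping algebra of $\Lie^{\TKK}(L(2)\otimes B)$ (a composition of left adjoints), and since $\calT$ is exact (it is projection onto a direct summand of the underlying $\mathfrak{sl}_2$-module), $\calT(\mathfrak{L}(B))$ is the image of $\calT(\Lie^{\TKK}(L(2)\otimes B))$ inside $\calT(R)=\overline{T}(B)^{(+)}$. By the description of free Lie algebras in $\TKK$ in \cite{MR4235202} we have $\Lie^{\TKK}(L(2)\otimes B)=\TAG(\Jord(B))$, and $\calT\circ\TAG\cong\mathrm{id}$ (a routine check with the bracket formula), so $\calT(\Lie^{\TKK}(L(2)\otimes B))\cong\Jord(B)$; as the canonical map $\Jord(B)\to\overline{T}(B)^{(+)}$ has image the free special Jordan algebra, we conclude $\calT(\mathfrak{L}(B))\cong\SJord(B)$.

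The counit of the adjunction $\TAG\dashv\calT$ now gives a morphism of Lie algebras in $\TKK$
\[
\phi\colon\TAG(\SJord(B))=\TAG(\calT(\mathfrak{L}(B)))\longrightarrow\mathfrak{L}(B),
\]
which, after unwinding the counit and the first step, sends $e\otimes s,\ h\otimes s,\ f\otimes s$ to $sE_{12},\ s(E_{11}-E_{22}),\ sE_{21}$ on $L(2)\otimes\SJord(B)$, and $s_1\wedge s_2$ to $(s_1s_2-s_2s_1)(E_{11}+E_{22})$ on $L(0)\otimes\calB(\SJord(B))$ — the latter well defined because $[z,z^2]=0$ in $T(B)$. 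It is surjective: the Lie subalgebra of $\TAG(\SJord(B))$ generated by $L(2)\otimes B$ is $\mathfrak{sl}_2$-stable, its weight-two part is closed under the Tits product and hence equals $\SJord(B)$, so it contains all of $L(2)\otimes\SJord(B)$, and then $[e\otimes z_1,f\otimes z_2]$ recovers $\tfrac12\,z_1\wedge z_2$ modulo the weight-two part, so the subalgebra is everything; applying $\phi$ shows its image is the Lie subalgebra of $R$ generated by $\mathfrak{sl}_2\otimes B$, namely $\mathfrak{L}(B)$. By the triangle identities $\phi$ restricts to an isomorphism on weight-two subspaces, hence, being $\mathfrak{sl}_2$-equivariant, to an isomorphism on the entire $L(2)$-isotypic part; with surjectivity this also forces the $L(0)$-isotypic part of $\mathfrak{L}(B)$ to be $[\SJord(B),\SJord(B)]\cdot(E_{11}+E_{22})$.

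It remains to prove $\phi$ injective, which is where essentially all the work lies. By the previous paragraph $\ker\phi$ sits in $L(0)\otimes\calB(\SJord(B))$, on which $\phi$ is the natural map $s_1\wedge s_2\mapsto s_1s_2-s_2s_1$ into $\overline{T}(B)$. Since the centre of $T(B)$ is trivial, the commutator map identifies $[\SJord(B),\SJord(B)]$ with $\Inner(\SJord(B))$, so the claim is equivalent to: $\calB(\SJord(B))\to\Inner(\SJord(B))$ is an isomorphism; equivalently, the three-term complex $S^3(\SJord(B))\xrightarrow{\,\partial\,}\Lambda^2(\SJord(B))\xrightarrow{\,\delta\,}\overline{T}(B)$ is exact in the middle, where $\delta(s\wedge t)=st-ts$ and $\partial(z_1z_2z_3)=z_1\wedge(z_2\circ z_3)+z_2\wedge(z_3\circ z_1)+z_3\wedge(z_1\circ z_2)$ (here $\delta\partial=0$ by a direct expansion of the cyclic sum, and $\operatorname{span}\{z\wedge z^2\}\subseteq\ker\delta$ is the easy inclusion). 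I would attack this by reducing, in characteristic zero, to the multilinear components: for each $n$ one must show that the part of $\Lambda^2$ of the multilinear special Jordan elements of $\k\langle b_1,\dots,b_n\rangle$ killed by the commutator map is spanned by the multilinearisations of $z\wedge z^2$, analysing the relevant $S_n$-module structures inside $\k[S_n]$; alternatively this should be extractable from the homological behaviour of the functor $\calB$ on special Jordan algebras studied in \cite{MR4235202}. (For $\dim B=1$ the assertion reduces to $\calB(\k[b]_{\ge1})=0$, which is elementary and matches the fact that $\mathfrak{L}(\k b)\cong L(2)\otimes\k[b]_{\ge1}$ has no $L(0)$-part.) Granting this, $\phi$ is an isomorphism of Lie algebras in $\TKK$, which proves the theorem.
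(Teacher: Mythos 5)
Your proposal is correct in outline but reaches the result by a genuinely different route, and it leaves its hardest step as a sketch. The paper's proof is more pedestrian: it first invokes \cite[Corollary~2]{MR4235202} ($\calB(\SJord(B))\cong\Inner(\SJord(B))$) and \cite[Lemma~5]{MR4235202} ($\Inner(\SJord(B))\cong[\SJord(B),\SJord(B)]$, the commutator taken in $T(B)$) to realise $\TAG(\SJord(B))$ concretely as $I\otimes[\SJord(B),\SJord(B)]\oplus\{E,H,F\}\otimes\SJord(B)$ inside $\Mat_2(T(B))$, and then verifies by direct computation that the brackets of $E\otimes\mathbf{x}$, $H\otimes\mathbf{x}$, $F\otimes\mathbf{x}$, $I\otimes\mathbf{x}$ reproduce the Tits--Kantor--Koecher bracket, so that this subspace is exactly the Lie subalgebra generated by the degree-one part. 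You instead build the comparison map $\phi$ from the counit of $\TAG\dashv\calT$ and argue surjectivity and injectivity separately; this is cleaner conceptually, and your identification of $\calT(\mathfrak{L}(B))$ with $\SJord(B)$ and the surjectivity argument are sound. The trade-off is that your route additionally invokes the main theorem of \cite{MR4235202} identifying $\Lie^{\TKK}(L(2)\otimes B)$ with $\TAG(\Jord(B))$, which the paper's direct bracket computation does not need.

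The one genuine issue is the injectivity step, which you correctly single out as ``where essentially all the work lies'' but do not complete: the middle exactness of $S^3(\SJord(B))\to\Lambda^2(\SJord(B))\to\overline{T}(B)$, i.e.\ that $\calB(\SJord(B))\to\Inner(\SJord(B))$ is an isomorphism, is a nontrivial theorem and your proposed multilinear $S_n$-module analysis is not carried out. Fortunately, your alternative suggestion is exactly right: this statement is \cite[Corollary~2]{MR4235202}, and the paper simply cites it. So the gap closes with a citation rather than new work, and no $S_n$-representation-theoretic argument is required; but as written, your proof of the theorem is incomplete without that reference.
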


\begin{proof}
It is proved in \cite[Corollary 2]{MR4235202} that we have 
$\calB(\SJord(B))\cong \Inner(\SJord(B))$,
so that the Tits--Allison--Gao functor applied to the free special Jordan algebra gives the same result as the Tits--Kantor--Koecher construction applied to that algebra, so we may focus on the latter.

According to \cite[Lemma 5]{MR4235202}, we have $\Inner(\SJord(B))\cong [\SJord(B),\SJord(B)]$ (where the commutator is taken in the free associative algebra), and thus the Tits--Kantor--Koecher construction applied to the free special Jordan algebra is
 \[
L(0)\otimes [\SJord(B),\SJord(B)]\oplus L(2)\otimes \SJord(B),  
 \]
with the Lie bracket given by 
\begin{multline*}
[u_1+a_1\otimes j_1,u_2+a_2\otimes j_2]= \\
\left([u_1,u_2]+K(a_1,a_2)[j_1,j_2]\right)+\left(a_2\otimes u_1(j_2)-a_1\otimes u_2(j_1)
+ [a_1,a_2]\otimes j_1\circ j_2\right).
\end{multline*}  
This means that if we denote by $1$ the basis element of $L(0)$ and by $e,h,f$ the bases elements of $L(2)\cong\mathfrak{sl}_2$, the Lie bracket is given by
\begin{gather*}
[1\otimes u_1,1\otimes u_2]=1\otimes [u_1,u_2],\\
[I\otimes u,e\otimes j]=e\otimes [u,j],\\
[I\otimes u,h\otimes j]=h\otimes [u,j],\\
[I\otimes u,f\otimes j]=f\otimes [u,j],\\
[e\otimes j_1,e\otimes j_2]=0\\
[e\otimes j_1,f\otimes j_2]=h\otimes (j_1\circ j_2)+1\otimes[j_1,j_2],\\
[f\otimes j_1,f\otimes j_2]=0\\
[e\otimes j_1,h\otimes j_2]=-2e\otimes (j_1\circ j_2),\\
[h\otimes j_1,f\otimes j_2]=2f\otimes (j_1\circ j_2),\\
[h\otimes j_1,h\otimes j_2]=\frac12\otimes[j_1,j_2]
\end{gather*}  

On the other hand, we recall from Proposition \ref{prop:descriptionAss} that the algebra 
 \[
\Ass^{\TKK}(L(2)\otimes B)
 \]
is explicitly given by 
 \[
\k\oplus L(2)\otimes B\oplus\bigoplus_{p\ge 2}\Mat_2(B^{\otimes p}).
 \]
It will be convenient to write elements of the latter algebra as combinations of the elements of the form
 \[
E\otimes \mathbf{x}, F\otimes \mathbf{x}, H\otimes \mathbf{x}, I\otimes \mathbf{x},
 \]
where $E,F,H$ correspond to the standard embedding of $\mathfrak{sl}_2$ into $2\times 2$-matrices, $I$ is the identity $2\times 2$-matrix, $\mathbf{x}$ is an word in basis elements of $B$ (for $I\otimes \mathbf{x}$, this word should be of length at least $2$). Let us compute the Lie brackets of such elements, taking into the multiplication table for the matrices $E,F,H$:
\begin{gather*}
EF=\frac12(I+H), \ \ FE=\frac12(I-H),\\
EH=-E, \ \ HE = E, \ \ FH=F, \ \ HF=-F,\\
E^2=F^2=0, \ \ H^2=I.
\end{gather*}
We obtain the following:
\begin{gather*}
[E\otimes \mathbf{x},E\otimes \mathbf{x}']=[F\otimes \mathbf{x},F\otimes \mathbf{x}']=0,\\
[E\otimes \mathbf{x},F\otimes \mathbf{x}']=EF\otimes \mathbf{x}\mathbf{x}'-FE\otimes \mathbf{x}'\mathbf{x}=H\otimes(\mathbf{x}\circ\mathbf{x}')+I\otimes\frac12[\mathbf{x},\mathbf{x}'],\label{eq:h-jord}\\
[E\otimes \mathbf{x},H\otimes \mathbf{x}']=EH\otimes\mathbf{x}\mathbf{x}'-HE\otimes\mathbf{x}'\mathbf{x}=-2E\otimes (\mathbf{x}\circ\mathbf{x}'),\label{eq:e-jord}\\
[I\otimes \mathbf{x},E\otimes \mathbf{x}']=E\otimes[\mathbf{x},\mathbf{x}'],\\
[F\otimes \mathbf{x},H\otimes \mathbf{x}']=FH\otimes\mathbf{x}\mathbf{x}'-HF\otimes\mathbf{x}'\mathbf{x}=2F\otimes (\mathbf{x}\circ\mathbf{x}'),\label{eq:f-jord}\\
[I\otimes \mathbf{x},F\otimes \mathbf{x}']=F\otimes[\mathbf{x},\mathbf{x}'],\\
[H\otimes \mathbf{x},H\otimes \mathbf{x}']=H^2\otimes\mathbf{x}\mathbf{x}'-H^2\otimes\mathbf{x}'\mathbf{x}=I\otimes [\mathbf{x},\mathbf{x}'],\label{eq:i-commutator}\\
[I\otimes \mathbf{x},H\otimes \mathbf{x}']=H\otimes[\mathbf{x},\mathbf{x}'],\\
[I\otimes \mathbf{x},I\otimes \mathbf{x}']=I\otimes[\mathbf{x},\mathbf{x}'].
\end{gather*}

Examining these formulas, we note that they match precisely the Lie brackets of the Tits--Kantor--Koecher construction above. This now allows us to prove by induction on degree of elements (with respect to $B$) that the identity map of the vector space $L(2)\otimes B$ the Lie algebra map extends to a well defined Lie algebra map from 
 \[
\TAG(\SJord(B))\cong L(0)\otimes [\SJord(B),\SJord(B)]\oplus L(2)\otimes \SJord(B)  
 \]
to the Lie subalgebra of $\Ass^{\TKK}(L(2)\otimes B)$ generated by $L(2)\otimes B$. This statement is trivial for generators ($n=1$), and the coincidence of the above formulas assures the step of induction.
\end{proof}

\subsection{Some other observations}

\subsubsection{Computational evidence}

In \cite{MR4235202}, some computational evidence is offered in favour of the main conjecture in the case of free Lie algebras. One computation that goes slightly further was performed while preparing this paper.

\begin{proposition}
The multilinear component of the free Jordan algebra on $8$ generators (in other words, the component $\Jord(8)$ of the Jordan operad) has dimension $19089$. Moreover, this agrees with the dimension of the degree $8$ coefficient of the Schur functor of the operad $\Jord$ predicted by the main conjecture.   
\end{proposition}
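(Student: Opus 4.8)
The plan is to attack the statement by a direct computer-algebra computation combined with a consistency check against the structural prediction of the main conjecture. First I would set up the free Jordan operad $\Jord$ as a quotient of the free symmetric operad with a single binary generator (the commutative product) by the operadic ideal generated by the multilinear Jordan identity recalled in the text. Since we work in characteristic zero, computing $\Jord(8)$ amounts to a linear algebra problem over $\mathbb{Q}$: produce a spanning set of multilinear Jordan monomials on $8$ inputs (rooted binary trees with leaves labelled by $\{1,\dots,8\}$, modulo the commutativity of the product), then quotient by the $S_8$-span of all ways to substitute the quadratic relation into a monomial of arity $8$. The dimension $19089$ is then the rank of the resulting quotient; the practical obstacle here is size control, since the naive number of multilinear monomials grows extremely fast, so one must work degree by degree and, if possible, isotypically under $S_8$ (decomposing into Specht components) to keep the matrices tractable — this is the step I expect to be the main computational bottleneck, and the place where an implementation in \texttt{sage} or a dedicated operadic Gr\"obner basis package is essential.

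Second, I would independently compute the number predicted by the main conjecture. By the Tits--Allison--Gao description recalled in the preliminaries, $\Lie^{\TKK}(L(2)\otimes B)\cong \TAG(\Jord(B))$, and its degree $n$ component (in $B$) carries the adjoint-and-trivial part of the homology $H_0$. Running the main conjecture backwards: the truncated homology $H_\bullet(\Lie^{\TKK}(L(2)\otimes B))^{\TKK}$ being concentrated in degrees $0$ and $1$ gives an Euler-characteristic identity relating the $\mathfrak{sl}_2$-character (equivalently the $GL(B)$-character, i.e.\ the symmetric-function generating series) of the free Lie algebra to that of $H_1$, exactly as in the symmetric monoidal case where one would write the Koszulness of the Lie operad. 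Concretely, one extracts from this the Schur-functor expansion of $\Jord$ degree by degree — the $n=8$ coefficient of that Schur functor, evaluated to give a dimension (i.e.\ summing multiplicities times dimensions of the corresponding $GL$-irreducibles, or equivalently specialising the symmetric function appropriately to count multilinear pieces), should again be $19089$. I would assemble the lower-degree data (degrees $\le 7$, which are either classical or already recorded in \cite{MR4235202}) and propagate the recursion one more step.

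Finally I would compare the two numbers. Agreement is not a proof of the main conjecture in degree $8$ — the conjecture predicts a specific $\mathfrak{sl}_2$-isotypic \emph{shape}, not merely a dimension — but the honest claim in the proposition is exactly the numerical coincidence: the directly computed $\dim\Jord(8)=19089$ matches the degree $8$ coefficient of the conjecturally predicted Schur functor. So the write-up is: (i) state the operadic Gr\"obner/linear-algebra setup; (ii) report the rank computation $\dim\Jord(8)=19089$, noting the $S_8$-isotypic breakdown used to make it feasible; (iii) recall the Euler-characteristic consequence of the main conjecture and extract the predicted degree $8$ multilinear dimension from the recursion seeded by the known lower-degree values; (iv) observe the two agree. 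The genuine obstacle throughout is purely computational feasibility — the multilinear Jordan operad in arity $8$ is large, and without the $S_8$-symmetry reduction the rank computation is out of reach — so I would emphasise that both halves of the calculation were carried out by machine (with \texttt{sage} \cite{sagemath}) and cross-checked against the smaller arities.
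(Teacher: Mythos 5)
Your proposal is correct and is essentially the paper's own argument: both halves are pure machine computations, with the paper delegating the rank computation of $\Jord(8)$ to the dedicated nonassociative-algebra package \texttt{albert} (rather than a hand-rolled \texttt{sage} implementation with $S_8$-isotypic reduction) and obtaining the predicted value by implementing the recursive character formula from the proof of Lemma~1 of \cite{MR4235202} in \texttt{sage}, which is exactly your Euler-characteristic recursion seeded by lower-degree data. The only difference is the choice of software for the first half; the logical content of the verification is identical.
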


\begin{proof}
The dimension of $\Jord(8)$ is computed using the \texttt{albert} software \cite{10.1145/190347.190358}. The degree $8$ coefficient of the Schur functor of $\Jord$ predicted by the main conjecture is computed by implementing the recursive formula of the proof of \cite[Lemma 1]{MR4235202} in \texttt{sage} \cite{sagemath}.
\end{proof}

\subsubsection{Braided categories of the same size}

As we repeatedly emphasized, if the category $\TKK$ were symmetric monoidal, the main conjecture would have been trivial. However, it is not: braided monoidal categories with two objects $1,c$ satisfying $c\otimes c=1\oplus c$ can be classified, and neither of them is symmetric. Algebras in those categories that are braided commutative can be classified \cite{MR2889539}, but nothing is understood about braided Lie algebras in those categories, which would be an interesting question to investigate, though we doubt that it would help with the main conjecture.

\subsubsection{Koszul duality}

If our category were symmetric monoidal, the main conjecture would be equivalent to the Koszulness of the appropriate operad. In such situation, proving that conjecture for Lie algebras and commutative associative algebras would be equivalent tasks. In our setting, the same is not at all clear, and we do not dare to see what one can learn from the fact that the main conjecture does not hold for free commutative associative algebras.

\subsubsection{The case of superalgebras}

In \cite{Shang2022}, the natural analogue of the main conjecture of \cite{MR4235202} is explored. While the latter conjecture is stated on the level of Schur functors, and therefore its validity for free algebras is manifestly equivalent to its validity for superalgebras, that work raises an interesting question of studying the superized version of the category $\TKK$, where the usual interplay between algebras and superalgebras (see, e.g. \cite{MR1156760,MR1492063,MR0981830}) is destroyed due to the lack of a symmetric monoidal structure.

\section*{Funding} Various stages of this project were supported by the S\~{a}o Paulo research foundation (FAPESP grant 2022/10933-3), by the French national research agency (project ANR-20-CE40-0016), and by Institut Universitaire de France. 

\section*{Acknowledgements} We thank Ivan Shestakov for many useful discussions. The first author is grateful to Irvin Hentzel for independently confirming the dimension of $\Jord(8)$.    

\section*{Conflict of interest} The authors has no conflicts of interest to declare that are relevant to the content of this article.

\printbibliography
\end{document}